\theoremstyle{thmstyleone}%
\newtheorem{theorem}{Theorem}
\newtheorem{proposition}[theorem]{Proposition}%
\newtheorem{corollary}[theorem]{Corollary}
\theoremstyle{thmstyletwo}%
\newtheorem{remark}{Remark}%
\theoremstyle{thmstylethree}%
\def\ds{\displaystyle}
\def\Rset{\mathbb{R}}
\begin{document}

\title[The role of permanently resident populations]{The role of permanently resident populations in the two-patches SIR model with commuters}


\author*[1]{\fnm{Alain} \sur{Rapaport}}\email{alain.rapaport@inrae.fr}

\author[1]{\fnm{Ismail} \sur{Mimouni}}\email{ismail.mimouni@yahoo.com}

\affil*[1]{\orgdiv{MISTEA}, \orgname{Universit\'e Montpellier, INRAE, Institut Agro}, \orgaddress{\street{place Viala}, \city{Montpellier}, \postcode{34060}, \country{France}}}

\abstract{We consider a two-patches SIR model where communication occurs thru commuters, distinguishing explicitly permanently resident populations from commuters populations. We give an explicit formula of the reproduction number, and show how the proportions of permanently resident populations impact it. We exhibit non-intuitive situations for which allowing commuting from a safe territory to another one where the transmission rate is higher can reduce the overall epidemic threshold and avoid an outbreak.}

\keywords{SIR model, reproduction number, patches models, commuters}


\pacs[MSC Classification]{92D30, 34D20, 90C31}

\maketitle

\section{Introduction}
\label{secintro}

Since the pioneer work of Kermack and McKendrick \cite{Kermack27}, the SIR model has been very popular in epidemiology, as the basic model for infectious diseases with direct transmission (e.g.~\cite{Anderson91}).  It retakes great importance nowadays due to the recent coronavirus pandemic. While early models were not spatialized, the importance of accounting for spatial heterogeneity has been often reported in the literature (see, e.g.~\cite{Angulo79, Sattenspiel95, Keeling04, Keeling07, Kelly16, Li21}). However, different mechanisms come into play to explain the spatial spreading of a disease. Although diffusion appears to be a natural process to describe the local propagation of an infectious agent among a population, which leads to models with partial differential equations \cite{Murray03}, it appears to be not well suited for describing long distance spreading. In particular, transportation between cities comes into the picture as a major source of rapid spreading among non-homogeneous populations \cite{Arino03, Arino07, Takeuchi07, Liu13, Mpolya14, Chen14, Yin20, ToctoErazo21,Lipshtat21}. Meta-populations or multi-patches models are then more appropriate to describe the spatial characteristics of the propagation \cite{Wang03, Wang04,  Arino06, Gao07, Arino09}, as already well considered in ecology \cite{Hanski99, McArthur01}. These models require a precise description of the movements between patches, which are most of the time assumed to be linear and thus encoded into a connection matrix \cite{Arino06, Arino09}. Typically one obtains a system of ordinary differential equations on a graph, which couples the communication dynamics with the epidemiological one. 

For diseases spreading among human populations living in different cities, {\em commuters} (individuals housing in a city, traveling regularly for short periods in a neighboring city, and coming back to their home city) play a crucial role in the disease propagation among territories \cite{Keeling02, Keeling04, Keeling07, Mpolya14, Yin20}. Such coupling between patches have been already considered in the literature, distinguishing among populations $N_i$ attached to a city $i$ the sub-population $N_{ii}$ present in its permanent housing from other sub-populations $N_{ij}$ temporary present in another city $j \neq i$ (it can be also seen as multi-groups models as in \cite{Clancy96, Guo06, Iggidr12}). However, such models explicitly assume that the whole population housing in a given city can potentially commute to another one. We believe that this is not always fully realistic and that a sub-population that never (or very rarely) moves to another city should be distinguished from the sub-population that visits at a regular basis another city.
Therefore, we consider an extension of such models, which explicitly takes into consideration two kinds of movement: an Eulerian one which describes the flow between patches that mixes populations, and an Lagrangian one which assigns home locations of individuals, as described in the more general framework \cite{Citron_etal2021}.
The study of this extension, which has not yet been analyzed analytically in the literature, to our knowledge, and how it impacts the disease spreading, is the primary objective of the present work. For this purpose, we establish an analytical expression of the reproduction number (as the epidemic threshold formerly introduced and analyzed in \cite{Diekmann90,vandenDriessche02,Diekmann07,Dhirasakdanon07}) for the two patches case (that is also valid for the particular case when the whole populations travel, for which the exact expression of the reproduction number has not been yet provided in the literature).

We also had in mind to consider heterogeneity among territories when disease transmission differs from one city to another one. Typically, non-pharmaceutical interventions (such as reducing physical distance in the population) could be applied with different strength in each city, providing distinct transmission rates.
When one territory being isolated presents a higher reproduction number than the other territory, it can be considered as a {\em core group} in the epidemiological terminology \cite{Hadeler-Castillo-Chavez1995,Brunham1997}, and commuters contribute then to spread the epidemics in both territories. We aim at analyzing more precisely how the proportions of commuters in each city can increase or decrease the overall reproduction number. Intuitively, one may believe that the best way to reduce the spreading is to encourage commuters from the city with the lowest transmission rate not  to travel to the other city, and on the opposite to encourage as much as possible commuters from the other city to spend time in the safer city. Indeed, we shall see that this is not always true... The second objective of the present work is thus to study the minimization of the epidemic threshold of the two-patches model with respect to these proportions, depending on the commuting rates. This analysis can potentially serve for decisions making to prevent epidemic outbreak (as in \cite{Knipl16} for instance).

The paper is organized as follows. In the next section, we present the complete model in dimension $18$ and give some preliminaries. Section \ref{secR0} is devoted to the analysis of the asymptotic behavior of the solutions of the model. We give and demonstrate an explicit expression of the reproduction number, introducing four relevant quantities $q_{ij}$ ($i, j = 1,2$). In a corollary, we also give an alternative way of computation, which is useful in the following. In Section \ref{secmin}, we study the minimization of the reproduction number with respect to the proportions of commuters in each patch. Finally, Section \ref{secillu} gives a numerical illustration of the results, considering two territories with intrinsic basic reproduction numbers lower and higher than one. We depict the relative sizes of the permanently resident populations that can avoid the outbreak of the epidemic depending on the commuting rates, and discuss the various cases. We end with a conclusion.

\section{The model}
\label{secmodel}

We follow the modeling of commuters proposed in \cite{Keeling02} between two patches (such as cities or territories), but here we consider in addition that a part of the population in each patch do not commute (the {\em permanently resident} sub-population).
We consider populations of size $N_i$ whose home belongs to a patch $i \in \{1,2\}$, structured in three groups:
\begin{enumerate}[i.]
	\item permanently resident, being all the time in patch $i$, whose population size is denoted $N_{ir}$,
	\item commuters to patch $j$, but located in patch $i$ at time $t$, of population size denoted $N_{ii}$,
	\item commuters to patch $j$ and located in patch $j$ at time $t$, of population size denoted $N_{ij}$.
\end{enumerate}
We shall denote $N_{ic}=N_{ii}+N_{ij}$ the size of the total population of commuters with their home in patch $i$. The individuals commutes to patch $j$ at a rate $\lambda_i$ with a return rate $\mu_i$. For each group $g \in \{ir,ii,ij\}$ we denote by $S_g$, $I_g$, $R_g$ the sizes of susceptible, infected and recovered sub-populations.
This modeling implicitly assumes that at any time there is no individual out the territories, that is traveling time is negligible. This assumption is therefore only valid for adjoining territories with short transportation times (by train, road...). It would not be valid between distant territories connected for example by boat or plane with non-negligible crossing times. In this case, it would be necessary to consider additional nodes of {\em in-transit} populations, as it has been considered for example in \cite{Colizza_etal2013,Patil2021} or in \cite{Ruan2015} where distance between nodes are explicitly taken into consideration. This would of course complicates the model and its study.

\medskip

We consider the SIR model assuming that the recovery parameter $\gamma$ is identical everywhere while the transmission rate $\beta_i$ depends on the patch $i$ but is identical among each group. 
Typically lifestyle and hygienic measures may differ between two cities, implying different values of $\beta$. Moreover, if two cities are on both sides of the border between two countries, the strength of non-pharmaceutical interventions are likely to be different, as is was for instance the case between European countries during the SARS-2 outbreak.
The model is written as follows (with $i \neq j$ in $\{1,2\}$).

\begin{flalign*}
\dot S_{ir}&=-\beta_i S_{ir}\frac{I_{ir}+I_{ii}+I_{ji}}{N_{ir}+N_{ii}+N_{ji}},\\
\dot I_{ir}&=\beta_i S_{ir}\frac{I_{ir}+I_{ii}+I_{ji}}{N_{ir}+N_{ii}+N_{ji}}-\gamma I_{ir},\\
\dot R_{ir}&=\gamma I_{ir},\\
\dot S_{ii}&=-\beta_i S_{ii}\frac{I_{ir}+I_{ii}+I_{ji}}{N_{ir}+N_{ii}+N_{ji}}-\lambda_i S_{ii}+\mu_i S_{ij},\\
\dot I_{ii}&=\beta_i S_{ii}\frac{I_{ir}+I_{ii}+I_{ji}}{N_{ir}+N_{ii}+N_{ji}}-\gamma I_{ii}-\lambda_i I_{ii}+\mu_i I_{ij},\\
\dot R_{ii}&=\gamma I_{ii}-\lambda_i R_{ii}+\mu_i R_{ij},\\
\dot S_{ij}&=-\beta_j S_{ij}\frac{I_{jr}+I_{jj}+I_{ij}}{N_{jr}+N_{jj}+N_{ij}}+\lambda_i S_{ii}-\mu_i S_{ij},\\
\dot I_{ij}&=\beta_j S_{ij}\frac{I_{jr}+I_{jj}+I_{ij}}{N_{jr}+N_{jj}+N_{ij}}-\gamma I_{ij}+\lambda_i I_{ii}-\mu_i I_{ij},\\
\dot R_{ij}&=\gamma I_{ij}+\lambda_i R_{ii}-\mu_i R_{ij}
\end{flalign*}
Parameters $\lambda_i$, $\mu_i$ represent switching rates of populations $i$, leaving home and returning. This modeling implicitly assumes that movements between territories are not synchronized, as often considered in multi-city models (see e.g.~ \cite{Sattenspiel95,Keeling02,Arino03,Wang03,Wang04,Keeling04,Arino06,Takeuchi07,Keeling07,Liu13,Chen14}). Note that we also consider, in all generality, that commuting is asymmetrical (i.e.~$\lambda_1$ and  $\lambda_2$ may be different, as well as $\mu_1$, $\mu_2$). Typically, each territory may offer different activities that attract commuters from the other territory, and thus different mean sojourn times.
One can check that the population sizes $N_{ir}$, 
and $N_{ic}$ 
are constant.
Moreover $N_{ii}$, 
$N_{ij}$ 
fulfill the system of equations

\begin{equation*}
\left\{\begin{array}{l}
\dot N_{ii} = -\lambda_i N_{ii} + \mu_i N_{ij},\\
\dot N_{ij} = \lambda_i N_{ii} - \mu_i N_{ij}
\end{array}\right.
\end{equation*}
whose solutions verify

\begin{equation}
\label{Nlimit}
\lim_{t \to +\infty} N_{ii}(t)=\bar N_{ii}:=\frac{\mu_i}{\lambda_i+\mu_i}N_{ic}, \quad \lim_{t \to +\infty} N_{ij}(t)=\bar N_{ij}:=\frac{\lambda_i}{\lambda_i+\mu_i}N_{ic}
\end{equation}
We shall assume that populations are already balanced at initial time i.e.~that one has $N_{ii}=\bar N_{ii}$, $N_{ij}=\bar N_{ij}$ (constant). For simplicity, we shall drop the notation $\bar{ }\,$ in the following,
and denote

\begin{equation*}
N_{ip}:=N_{ir}+N_{ii}+N_{ji}
\end{equation*}
which represents the (constant) size of the total population present in patch $i$.

\section{The epidemic threshold}
\label{secR0}

We denote the vectors 

\begin{equation*}
I=(I_{1r},I_{11},I_{12},I_{2r},I_{22},I_{21})^\top, \quad S=(S_{1r},S_{11},S_{12},S_{2r},S_{22},S_{21})^\top
\end{equation*}
and consider the state vector 

\begin{equation*}
X=\left[\begin{array}{c}I\\S\end{array}\right]
\end{equation*}
which belongs to the invariant domain

\begin{equation*}
{\cal D} : = \{ X \in \Rset_+^{12} ; \; \mathbb{M}X\leq \mathbb{N}\}
\end{equation*}
where $\mathbb{N}$ is the vector

\begin{equation*}
\mathbb{N} =(N_{1r},N_{11},N_{12},N_{2r},N_{22},N_{21})^\top
\end{equation*}
and $\mathbb{M}$ the  $6\times 12$ matrix which consists in the concatenation of the identity matrix $\mathbb{I}_6$ of dimension $6\times 6$
\[
\mathbb{M}=[\mathbb{I}_6 , \mathbb{I}_6] 
\]
The disease free equilibrium is defined as

\begin{equation*}
X^\star=\left[\begin{array}{c}0\\\mathbb{N}\end{array}\right]
\end{equation*}
Let ${\cal R}_i$ be the intrinsic reproduction number in the patch $i$ (i.e.~when there is no connection between patches), that is

\begin{equation*}
{\cal R}_i:=\frac{\beta_i}{\gamma} .
\end{equation*}
We give now an explicit expression of the epidemic threshold when the two patches communicates via commuters. 

\begin{proposition}
	\label{mainprop}
	Let
	
	\begin{equation}
	\label{R012}
	{\cal R}_{1,2}:=\frac{ q_{11} + q_{22} + \sqrt{ (q_{22} - q_{11})^2 + 4q_{12}q_{21}}}{2}
	\end{equation}
	where
	
	\begin{equation}
	\label{qij}
	\left\{\begin{array}{l}
	q_{11}= {\cal R}_1 \left(\frac{N_{1r}}{N_{1p}}+ \frac{N_{11}}{N_{1p}}\frac{\gamma+\mu_1}{\gamma+\lambda_1+\mu_1}+
	\frac{N_{21}}{N_{1p}}\frac{\gamma+\lambda_2}{\gamma+\lambda_2+\mu_2}\right)\\
	\noalign{\medskip} q_{22}= {\cal R}_2 \left(\frac{N_{2r}}{N_{2p}}+ \frac{N_{22}}{N_{2p}}\frac{\gamma+\mu_2}{\gamma+\lambda_2+\mu_2}+
	\frac{N_{12}}{N_{2p}}\frac{\gamma+\lambda_1}{\gamma+\lambda_1+\mu_1}\right)\\
	\noalign{\medskip} q_{21}= {\cal R}_1 \left( \frac{N_{11}}{N_{1p}}\frac{\lambda_1}{\gamma+\lambda_1+\mu_1}+
	\frac{N_{21}}{N_{1p}}\frac{\mu_2}{\gamma+\lambda_2+\mu_2}\right)\\
	\noalign{\medskip} q_{12}= {\cal R}_2 \left( \frac{N_{12}}{N_{2p}}\frac{\mu_1}{\gamma+\lambda_1+\mu_1}+
	\frac{N_{22}}{N_{2p}}\frac{\lambda_2}{\gamma+\lambda_2+\mu_2}\right)
	\end{array}\right.
	\end{equation}
	Then, one has the following properties.
	\begin{enumerate}[i.]
		\item If ${\cal R}_{1,2}>1$, then $X^\star$ is unstable.
		
		\item If ${\cal R}_{1,2}<1$, then $X^\star$ is exponentially stable with respect to the variable\footnote{We refer to \cite{Vorotnikov98} for the definition of partial stability.}  $I$.
		
		\item If ${\cal R}_1={\cal R}_2:={\cal R}$, then ${\cal R}_{1,2}={\cal R}$.
	\end{enumerate}
\end{proposition}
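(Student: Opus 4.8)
The plan is to apply the next‑generation approach of \cite{vandenDriessche02}. First I would linearize the $I$‑subsystem at $X^\star$. Because every infection term is bilinear in $S$ and $I$, the block $\partial\dot I/\partial S$ vanishes at $X^\star$ (each entry is proportional to a component of $I=0$), so the Jacobian at $X^\star$ is block‑triangular and the $I$‑dynamics decouples at first order, governed by a $6\times6$ matrix $F-V$. Here $F\ge 0$ collects the new‑infection rates; its rows are dictated by the \emph{location} of each group, so the three rows that acquire infection through the force in patch $1$ (those of $I_{1r},I_{11},I_{21}$) are all proportional to the indicator of the infectives present in patch $1$, and symmetrically for patch $2$. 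Thus $F$ has rank $2$. The matrix $V$ gathers recovery and commuting; it is block‑diagonal in the home index, with a $3\times3$ block $V_i$ acting on $(I_{ir},I_{ii},I_{ij})$ carrying $\gamma$ on the resident line and the cooperative commuting part on $(I_{ii},I_{ij})$. One checks $\det V_i=\gamma^2(\gamma+\lambda_i+\mu_i)>0$, so $V$ is a non‑singular M‑matrix and $V^{-1}\ge 0$.

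The core computation is to identify $\rho(FV^{-1})$ with $\mathcal R_{1,2}$. Exploiting that $F$ has rank $2$, I would factor $F=BC$ with $B\in\Rset^{6\times2}$ holding the two columns of creation coefficients (the patch‑$1$ and patch‑$2$ rows) and $C\in\Rset^{2\times6}$ the two location indicators $I_{1r}+I_{11}+I_{21}$ and $I_{12}+I_{2r}+I_{22}$. Since the nonzero spectrum of a product is unchanged under commutation (Sylvester), the nonzero eigenvalues of $FV^{-1}=B(CV^{-1})$ coincide with those of the $2\times2$ matrix $CV^{-1}B$; and as $FV^{-1}\ge 0$ its spectral radius is attained on this nonzero spectrum. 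After inverting the two blocks $V_i$ — the lower $2\times2$ inverse, divided by $\gamma(\gamma+\lambda_i+\mu_i)$, produces exactly the fractions $(\gamma+\mu_i)/(\gamma+\lambda_i+\mu_i)$, $\lambda_i/(\gamma+\lambda_i+\mu_i)$, etc., with $\beta_i/\gamma=\mathcal R_i$ factored out — a direct multiplication should yield $CV^{-1}B=Q:=\bigl(q_{ij}\bigr)_{i,j=1,2}$ with the entries of (\ref{qij}). Its dominant eigenvalue is precisely the right‑hand side of (\ref{R012}), so $\mathcal R_{1,2}=\rho(Q)=\rho(FV^{-1})$. This bookkeeping — pinning down $F$, $V$ and the product $CV^{-1}B$ exactly — is the main obstacle; the rest is soft.

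With $\mathcal R_{1,2}=\rho(FV^{-1})$ in hand, (i) and (ii) follow from the sign correspondence between $\rho(FV^{-1})-1$ and the spectral abscissa $s(F-V)$ (largest real part of an eigenvalue), valid for $F\ge 0$ and $V$ a non‑singular M‑matrix (Lemma~1 of \cite{vandenDriessche02}). If $\mathcal R_{1,2}>1$ then $s(F-V)>0$; since $F-V$ is a diagonal block of the block‑triangular Jacobian at $X^\star$, one has $\mathrm{spec}(F-V)\subseteq\mathrm{spec}(J)$, so $J$ has an eigenvalue with positive real part and $X^\star$ is unstable, giving (i). If $\mathcal R_{1,2}<1$ then $s(F-V)<0$. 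To obtain the exponential stability in $I$ of (ii) while avoiding the center‑manifold issues coming from the conserved populations in the $S$‑block, I would argue by comparison: on ${\cal D}$ one has $S\le\mathbb N$ componentwise, the exact $I$‑dynamics reads $\dot I=\Phi(S)I-VI$ with $0\le\Phi(S)\le\Phi(\mathbb N)=F$, and $F-V$ is Metzler; hence $0\le I(t)\le e^{(F-V)t}I(0)$, and $s(F-V)<0$ forces exponential decay of $I$.

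Finally, for (iii) the key remark is that the two columns of $Q$ sum respectively to $\mathcal R_1$ and $\mathcal R_2$. Indeed, in $q_{11}+q_{21}$ the commuter contributions recombine through $(\gamma+\lambda_1+\mu_1)/(\gamma+\lambda_1+\mu_1)=1$ and $(\gamma+\lambda_2+\mu_2)/(\gamma+\lambda_2+\mu_2)=1$, leaving $\mathcal R_1(N_{1r}+N_{11}+N_{21})/N_{1p}=\mathcal R_1$ because $N_{1p}=N_{1r}+N_{11}+N_{21}$; symmetrically $q_{12}+q_{22}=\mathcal R_2$. When $\mathcal R_1=\mathcal R_2=:\mathcal R$, the nonnegative matrix $Q$ therefore has both column sums equal to $\mathcal R$, so $(1,1)$ is a left eigenvector for the eigenvalue $\mathcal R$, while the spectral radius of a nonnegative matrix never exceeds its largest column sum. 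Hence $\mathcal R_{1,2}=\rho(Q)=\mathcal R$, which is (iii).
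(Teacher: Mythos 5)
Your proposal is correct, and its outer structure coincides with the paper's: the same block-triangular Jacobian at $X^\star$, the same sign equivalence from \cite{vandenDriessche02} between $\rho(FV^{-1})-1$ and the spectral abscissa of $F-V$ (giving part i), and the same cooperative-comparison bound $0\le I(t)\le e^{(F-V)t}I(0)$ on the invariant domain (giving part ii). Where you genuinely differ is the central identification $\rho(FV^{-1})={\cal R}_{1,2}$. You factor $F=BC$ through its rank-two structure and use cyclic invariance of the nonzero spectrum to replace $FV^{-1}=B\,(CV^{-1})$ by the $2\times 2$ matrix $CV^{-1}B$; your hedge (``a direct multiplication should yield'') does resolve positively: pairing the block inverses of $V$ with the two columns of $B$ and the two indicator rows of $C$ gives exactly $CV^{-1}B=(q_{ij})_{i,j=1,2}$ with the entries of \eqref{qij}, and \eqref{R012} is the dominant root of its characteristic polynomial. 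The paper instead writes out the full $6\times 6$ matrix $M=FV^{-1}$, conjugates by a diagonal matrix $D$, and exploits the rank-two structure of $Q=D^{-1}MD$ to construct the Perron eigenvector by hand as $\alpha Y+(1-\alpha)Z$, extracting the eigenvalue from the quadratic system \eqref{sys_alpha}; for part iii it checks $M^\top U={\cal R}U$ with $U$ the positive vector of ones and invokes Perron--Frobenius, whereas you work with the column sums of the $2\times 2$ matrix. Your route is shorter, avoids the $6\times 6$ bookkeeping and the guessed similarity transformation, and extends essentially verbatim to $n$ patches---it is in substance the observation of Remark \ref{remark}. What the paper's construction buys is the explicit weight $\alpha$ and the system \eqref{sys_alpha}, which are reused downstream: Corollary \ref{maincoro} and the minimization analysis of Section \ref{secmin} rest on them; note, though, that the identities $q_{11}+q_{21}={\cal R}_1$ and $q_{12}+q_{22}={\cal R}_2$ powering your part iii are precisely the ones the paper later verifies to prove that corollary.
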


\medskip

\begin{proof}
	Write the dynamics of $X$ as $\dot X=f(X)$. The Jacobian matrix $J$ of $f$ at $X^\star$ is of the form
	\[
	J=\left[\begin{array}{cc} A & 0\\
	\star & B \end{array}\right] \mbox{ with } A=F-V
	\]
	where
	\[
	F=  \left[ \begin {array}{cccccc} \beta_1\frac{N_{1r}}{N_{1p}}&\beta_1\frac{N_{1r}}{N_{1p}}&0&0&0&\beta_1\frac{N_{1r}}{N_{1p}}\\ 
	\noalign{\medskip}\beta_1\frac{N_{11}}{N_{1p}}&\beta_1\frac{N_{11}}{N_{1p}}&0&0&0&\beta_1\frac{N_{11}}{N_{1p}}\\
	\noalign{\medskip}0&0&\beta_2\frac{N_{12}}{N_{2p}}&\beta_2\frac{N_{12}}{N_{2p}}&\beta_2\frac{N_{12}}{N_{2p}}&0\\ \noalign{\medskip}0&0&\beta_2\frac{N_{2r}}{N_{2p}}&\beta_2\frac{N_{2r}}{N_{2p}}&\beta_2\frac{N_{2r}}{N_{2p}}&0\\ \noalign{\medskip}0&0&\beta_2\frac{N_{22}}{N_{2p}}&\beta_2\frac{N_{22}}{N_{2p}}&\beta_2\frac{N_{22}}{N_{2p}}&0\\ \noalign{\medskip}\beta_1\frac{N_{21}}{N_{1p}}&\beta_1\frac{N_{21}}{N_{1p}}&0&0&0&\beta_1\frac{N_{21}}{N_{1p}}\end {array} \right],
	\]
	\smallskip
	\[
	V=\left[ \begin {array}{cccccc} \gamma&0&0&0&0&0\\ \noalign{\medskip}0&
	\gamma+\lambda_1&-\mu_1&0&0&0\\ \noalign{\medskip}0&-\lambda_1&\gamma+\mu_1&0&0
	&0\\ \noalign{\medskip}0&0&0&\gamma&0&0\\ \noalign{\medskip}0&0&0&0&\gamma+\lambda_2&-\mu_2\\ \noalign{\medskip}0&0&0&0&-\lambda_2&\gamma+\mu_2
	\end {array} \right] 
	\]
	and
	\[
	B=\left[ \begin {array}{cccccc} 0&0&0&0&0&0\\ 
	\noalign{\medskip}0&-\lambda_1&\mu_1&0&0&0\\ 
	\noalign{\medskip}0&\lambda_1&-\mu_1&0&0&0\\ 
	\noalign{\medskip}0&0&0&0&0&0\\ 
	\noalign{\medskip}0&0&0&0&-\lambda_2&\mu_2\\ 
	\noalign{\medskip}0&0&0&0&\lambda_2&-\mu_2
	\end {array} \right] 
	\]
	Note that $F$ is a non-negative matrix and $V$ is a non-singular M-matrix. We recall (see for instance from \cite{vandenDriessche02}) that one has the property
	\[
	\max Re(Spec(A))\underset{\displaystyle >}{<}0 \Longleftrightarrow \rho(FV^{-1})\underset{\displaystyle >}{<}1
	\]
	The computation of the matrix $M:=FV^{-1}$ gives the following expression
	{\tiny
	\[
	M=\left[ \begin {array}{cccccc} 
	
	{\cal R}_1\frac{N_{1r}}{N_{1p}}&
	{\cal R}_1\frac{N_{1r}(\gamma+\mu_1)}{N_{1p}(\gamma+\lambda_1+\mu_1)}&
	{\cal R}_1\frac{N_{1r}\mu_1}{N_{1p}(\gamma+\lambda_1+\mu_1)}&
	0&
	{\cal R}_1\frac{N_{1r}\lambda_2}{N_{1p}(\gamma+\lambda_2+\mu_2)}&
	{\cal R}_1\frac{N_{1r}(\gamma+\lambda_2)}{N_{1p}(\gamma+\lambda_2+\mu_2)}\\ 
	
	\noalign{\medskip}
	{\cal R}_1\frac{N_{11}}{N_{1p}}&
	{\cal R}_1\frac{N_{11}(\gamma+\mu_1)}{N_{1p}(\gamma+\lambda_1+\mu_1)}&
	{\cal R}_1\frac{N_{11}\mu_1}{N_{1p}(\gamma+\lambda_1+\mu_1)}&
	0&
	{\cal R}_1\frac{N_{11}\lambda_2}{N_{1p}(\gamma+\lambda_2+\mu_2)}&
	{\cal R}_1\frac{N_{11}(\gamma+\lambda_2)}{N_{1p}(\gamma+\lambda_2+\mu_2)}\\ 
	
	\noalign{\medskip}
	0&
	{\cal R}_2\frac{N_{12}\lambda_1}{N_{2p}(\gamma+\lambda_1+\mu_1)}&
	{\cal R}_2\frac{N_{12}(\gamma+\lambda_1)}{N_{2p}(\gamma+\lambda_1+\mu_1)}&
	{\cal R}_2\frac{N_{12}}{N_{2p}}&
	{\cal R}_2\frac{N_{12}(\gamma+\mu_2)}{N_{2p}(\gamma+\lambda_2+\mu_2)}&
	{\cal R}_2\frac{N_{12}\mu_2}{N_{1p}(\gamma+\lambda_2+\mu_2)}\\ 
	
	\noalign{\medskip}
	0&
	{\cal R}_2\frac{N_{2r}\lambda_1}{N_{2p}(\gamma+\lambda_1+\mu_1)}&
	{\cal R}_2\frac{N_{2r}(\gamma+\lambda_1)}{N_{2p}(\gamma+\lambda_1+\mu_1)}&
	{\cal R}_2\frac{N_{2r}}{N_{2p}}&
	{\cal R}_2\frac{N_{2r}(\gamma+\mu_2)}{N_{2p}(\gamma+\lambda_2+\mu_2)}&
	{\cal R}_2\frac{N_{2r}\mu_2}{N_{1p}(\gamma+\lambda_2+\mu_2)}\\ 
	
	\noalign{\medskip}
	0&
	{\cal R}_2\frac{N_{22}\lambda_1}{N_{2p}(\gamma+\lambda_1+\mu_1)}&
	{\cal R}_2\frac{N_{22}(\gamma+\lambda_1)}{N_{2p}(\gamma+\lambda_1+\mu_1)}&
	{\cal R}_2\frac{N_{22}}{N_{2p}}&
	{\cal R}_2\frac{N_{22}(\gamma+\mu_2)}{N_{2p}(\gamma+\lambda_2+\mu_2)}&
	{\cal R}_2\frac{N_{22}\mu_2}{N_{1p}(\gamma+\lambda_2+\mu_2)}\\ 
	
	\noalign{\medskip}
	{\cal R}_1\frac{N_{21}}{N_{1p}}&
	{\cal R}_1\frac{N_{21}(\gamma+\mu_1)}{N_{1p}(\gamma+\lambda_1+\mu_1)}&
	{\cal R}_1\frac{N_{21}\mu_1}{N_{1p}(\gamma+\lambda_1+\mu_1)}&
	0&
	{\cal R}_1\frac{N_{21}\lambda_2}{N_{1p}(\gamma+\lambda_2+\mu_2)}&
	{\cal R}_1\frac{N_{21}(\gamma+\lambda_2)}{N_{1p}(\gamma+\lambda_2+\mu_2)}
	
	\end {array} \right] 
	\]}
	Let us consider the diagonal matrix
	\[
	D:=\left[  \begin{array}{cccccc}
	{\cal R}_1\frac{N_{1r}}{N_{1p}} & \\
	& {\cal R}_1\frac{N_{11}}{N_{1p}} \\
	& & {\cal R}_2\frac{N_{12}}{N_{2p}}\\
	& & & {\cal R}_2\frac{N_{2r}}{N_{2p}}\\
	& & & & {\cal R}_2\frac{N_{22}}{N_{2p}}\\
	& & & & & {\cal R}_1\frac{N_{21}}{N_{1p}}
	\end{array}\right]
	\]
	and the matrix $Q=D^{-1} M D$, whose computation gives the expression
{\tiny
	\[
	Q=\left[ \begin {array}{cccccc} 
	
	{\cal R}_1\frac{N_{1r}}{N_{1p}}&
	{\cal R}_1\frac{N_{11}(\gamma+\mu_1)}{N_{1p}(\gamma+\lambda_1+\mu_1)}&
	{\cal R}_2\frac{N_{12}\mu_1}{N_{2p}(\gamma+\lambda_1+\mu_1)}&
	0&
	{\cal R}_2\frac{N_{22}\lambda_2}{N_{2p}(\gamma+\lambda_2+\mu_2)}&
	{\cal R}_1\frac{N_{21}(\gamma+\lambda_2)}{N_{1p}(\gamma+\lambda_2+\mu_2)}\\ 
	
	\noalign{\medskip}
	{\cal R}_1\frac{N_{1r}}{N_{1p}}&
	{\cal R}_1\frac{N_{11}(\gamma+\mu_1)}{N_{1p}(\gamma+\lambda_1+\mu_1)}&
	{\cal R}_2\frac{N_{12}\mu_1}{N_{2p}(\gamma+\lambda_1+\mu_1)}&
	0&
	{\cal R}_2\frac{N_{22}\lambda_2}{N_{2p}(\gamma+\lambda_2+\mu_2)}&
	{\cal R}_1\frac{N_{21}(\gamma+\lambda_2)}{N_{1p}(\gamma+\lambda_2+\mu_2)}\\ 
	
	\noalign{\medskip}
	0&
	{\cal R}_1\frac{N_{11}\lambda_1}{N_{1p}(\gamma+\lambda_1+\mu_1)}&
	{\cal R}_2\frac{N_{12}(\gamma+\lambda_1)}{N_{2p}(\gamma+\lambda_1+\mu_1)}&
	{\cal R}_2\frac{N_{2r}}{N_{2p}}&
	{\cal R}_2\frac{N_{22}(\gamma+\mu_2)}{N_{2p}(\gamma+\lambda_2+\mu_2)}&
	{\cal R}_1\frac{N_{21}\mu_2}{N_{1p}(\gamma+\lambda_2+\mu_2)}\\ 
	
	\noalign{\medskip}
	0&
	{\cal R}_1\frac{N_{11}\lambda_1}{N_{1p}(\gamma+\lambda_1+\mu_1)}&
	{\cal R}_2\frac{N_{12}(\gamma+\lambda_1)}{N_{2p}(\gamma+\lambda_1+\mu_1)}&
	{\cal R}_2\frac{N_{2r}}{N_{2p}}&
	{\cal R}_2\frac{N_{22}(\gamma+\mu_2)}{N_{2p}(\gamma+\lambda_2+\mu_2)}&
	{\cal R}_1\frac{N_{21}\mu_2}{N_{1p}(\gamma+\lambda_2+\mu_2)}\\

	\noalign{\medskip}
	0&
	{\cal R}_1\frac{N_{11}\lambda_1}{N_{1p}(\gamma+\lambda_1+\mu_1)}&
	{\cal R}_2\frac{N_{12}(\gamma+\lambda_1)}{N_{2p}(\gamma+\lambda_1+\mu_1)}&
	{\cal R}_2\frac{N_{2r}}{N_{2p}}&
	{\cal R}_2\frac{N_{22}(\gamma+\mu_2)}{N_{2p}(\gamma+\lambda_2+\mu_2)}&
	{\cal R}_1\frac{N_{21}\mu_2}{N_{1p}(\gamma+\lambda_2+\mu_2)}\\ 
	
	\noalign{\medskip}
	{\cal R}_1\frac{N_{1r}}{N_{1p}}&
	{\cal R}_1\frac{N_{11}(\gamma+\mu_1)}{N_{1p}(\gamma+\lambda_1+\mu_1)}&
	{\cal R}_2\frac{N_{12}\mu_1}{N_{2p}(\gamma+\lambda_1+\mu_1)}&
	0&
	{\cal R}_2\frac{N_{22}\lambda_2}{N_{2p}(\gamma+\lambda_2+\mu_2)}&
	{\cal R}_1\frac{N_{21}(\gamma+\lambda_2)}{N_{1p}(\gamma+\lambda_2+\mu_2)} 
	
	\end {array} \right] 
	\]}
	The matrix $Q$ is non-negative and irreducible. By Perron-Frobenius Theorem (see for instance \cite{BermanPlemmons}), this matrix admits a unique positive eigenvector (up to a scalar multiplication) that corresponds to the simple (positive) eigenvalue $\ell=\rho(Q)=\rho(M)$.
	
	Note that the rank of $Q$ is two. We posit
	\[
	Y=(1,1,0,0,0,1)^\top, \quad Z=(0,0,1,1,1,0)^\top
	\]
	and define $Q_Y$, $Q_Z$ the first and third lines, respectively, of the matrix $Q$.
	Then, for any vector $X \in \Rset^6$, on has one has $QX=(Q_Y X)Y+(Q_Z X)Z$. We look for an positive eigenvector $X$ of the form $X=\alpha Y + (1-\alpha) Z$ with $\alpha \in (0,1)$. One has then
	
	\begin{align}
	\label{cond1}
	\nonumber
	QX & = \alpha QY + (1-\alpha) QZ\\
	\nonumber
	&  = \alpha\big( (Q_Y Y)Y+(Q_Z Y)Z  \big) + (1-\alpha) \big( (Q_Y Z)Y+(Q_Z Z)Z  \big)\\
	& =\big(  \alpha (Q_Y Y) + (1-\alpha)  (Q_Y Z) \big) Y + \big(  \alpha (Q_Z Y) + (1-\alpha)  (Q_Z Z) \big) Z
	\end{align}
	On the other hand, as $X$ is an eigenvector, one has
	\begin{equation}
	\label{cond2}
	QX=\ell X = \alpha \ell Y + (1-\alpha)\ell Z
	\end{equation}
	The vectors $Y$ and $Z$ being orthogonal, one obtains from \eqref{cond1}-\eqref{cond2} the conditions
	\begin{equation}
	\label{sys_alpha}
	\left\{\begin{array}{l}
	\alpha Q_Y Y + (1-\alpha)  Q_Y Z = \alpha \ell\\
	\alpha Q_Z Y + (1-\alpha)  Q_Z Z  = (1-\alpha)\ell
	\end{array}\right.
	\end{equation}
	Let $r=\frac{1-\alpha}{\alpha}$. Eliminating $\ell$ in the two previous equations, $r$ is the positive solution of the polynomial
	\[
	r^2 Q_Y Z + r(Q_Y Y -Q_Z Z) -Q_Z Y=0
	\]
	and $\ell=Q_Y Y + r Q_Y Z$. One obtains the expression of the eigenvalue
	\[
	\ell= \frac{ Q_Y Y + Q_Z Z + \sqrt{ (Q_Y Y - Q_Z Z)^2 + 4 (Q_Y Z)(Q_Z Y)}}{2}
	\]
	Finally, from the expression of $Q$, one gets
	\[
	\begin{array}{l}
	q_{11}=Q_Y Y = {\cal R}_1 \left(\frac{N_{1r}}{N_{1p}}+ \frac{N_{11}}{N_{1p}}\frac{\gamma+\mu_1}{\gamma+\lambda_1+\mu_1}+
	\frac{N_{21}}{N_{1p}}\frac{\gamma+\lambda_2}{\gamma+\lambda_2+\mu_2}\right)\\
	\noalign{\medskip}
	q_{22}=Q_Z Z = {\cal R}_2 \left(\frac{N_{2r}}{N_{2p}}+ \frac{N_{22}}{N_{2p}}\frac{\gamma+\mu_2}{\gamma+\lambda_2+\mu_2}+
	\frac{N_{12}}{N_{2p}}\frac{\gamma+\lambda_1}{\gamma+\lambda_1+\mu_1}\right)\\
	\noalign{\medskip}
	q_{21}=Q_Z Y = {\cal R}_1 \left( \frac{N_{11}}{N_{1p}}\frac{\lambda_1}{\gamma+\lambda_1+\mu_1}+
	\frac{N_{21}}{N_{1p}}\frac{\mu_2}{\gamma+\lambda_2+\mu_2}\right)\\
	\noalign{\medskip}
	q_{12}=Q_Y Z = {\cal R}_2 \left( \frac{N_{12}}{N_{2p}}\frac{\mu_1}{\gamma+\lambda_1+\mu_1}+
	\frac{N_{22}}{N_{2p}}\frac{\lambda_2}{\gamma+\lambda_2+\mu_2}\right)\\
	\end{array}
	\]
	and thus $\ell={\cal R}_{1,2}$, which is exactly $\rho(M)$.
	
	\medskip
	
	i. When ${\cal R}_{1,2}>1$, the matrix $A$ has at least one eigenvalue with positive real part and the matrix $J$ as well. The equilibrium $X^\star$ is thus unstable on ${\cal D}$.
	
	\medskip
	
	ii. When ${\cal R}_{1,2}<1$, the matrix $A$ is Hurwitz, but $X^\star$ is not an hyperbolic equilibrium. However, on can write the dynamics of the vector $I$ as an non-autonomous system
	\[
	\dot I = g(t,I):=\left(\begin{array}{c}
	\beta_1 S_{1r}(t)\frac{I_{1r}+I_{11}+I_{21}}{N_{1p}}-\gamma I_{1r}\\
	\noalign{\medskip} \beta_1 S_{11}(t)\frac{I_{1r}+I_{11}+I_{21}}{N_{1p}}-(\gamma +\lambda_1)I_{11}+\mu_1I_{12}\\
	\noalign{\medskip} \beta_2 S_{12}(t)\frac{I_{2r}+I_{22}+I_{12}}{N_{2p}}+\lambda_1I_{11}-(\gamma +\mu_1)I_{12}\\
	\noalign{\medskip}  \beta_2 S_{2r}(t)\frac{I_{2r}+I_{22}+I_{12}}{N_{2p}}-\gamma I_{2r}\\
	\noalign{\medskip} \beta_2 S_{22}(t)\frac{I_{2r}+I_{22}+I_{12}}{N_{2p}}-(\gamma +\lambda_2)I_{22}+\mu_2I_{21}\\
	\noalign{\medskip} \beta_1 S_{21}(t)\frac{I_{1r}+I_{11}+I_{21}}{N_{1p}}+\lambda_2I_{22}-(\gamma +\mu_2)I_{21}
	\end{array}\right)
	\]
	Note that this dynamics is cooperative and as for any $t\geq 0$ one has $S_{ij}(t)\leq N_{ij}$ for $ij \in \{1r,11,12,2r,22,21\}$, one gets
	\[
	g(t,I) \leq \bar g(I):=AI, \; I \geq 0
	\]
	Therefore, any solution $I(\cdot)$ of $\dot I=g(t,I)$ with $I(0)=I_0\geq 0$ verifies $0\leq I(t) \leq \bar I(t)$ for any $t\geq 0$, where $\bar I(\cdot)$ is solution of the linear dynamics $\dot{\bar I}=\bar g(\bar I)$ with $\bar I(0)=I_0$. As $A$ is Hurwitz, we conclude that $X^\star$ is exponentially stable with respect to $I$, which proves point ii.
	
	\medskip
	
	iii. For the particular case  ${\cal R}_1={\cal R}_2:={\cal R}$, the transpose of the matrix $M$ writes
	{\tiny 
		\[
	M^\top={\cal R}
	\left[ \begin {array}{cccccc} 
	
	\frac{N_{1r}}{N_{1p}}& 
	\frac{N_{11}}{N_{1p}}&
	0&
	0&
	0&
	\frac{N_{21}}{N_{1p}}\\
	
	\noalign{\medskip}
	\frac{N_{1r}(\gamma+\mu_1)}{N_{1p}(\gamma+\lambda_1+\mu_1)}&
	\frac{N_{11}(\gamma+\mu_1)}{N_{1p}(\gamma+\lambda_1+\mu_1)}&
	\frac{N_{12}\lambda_1}{N_{2p}(\gamma+\lambda_1+\mu_1)}&
	\frac{N_{2r}\lambda_1}{N_{2p}(\gamma+\lambda_1+\mu_1)}&
	\frac{N_{22}\lambda_1}{N_{2p}(\gamma+\lambda_1+\mu_1)}&
	\frac{N_{21}(\gamma+\mu_1)}{N_{1p}(\gamma+\lambda_1+\mu_1)}\\
	
	\noalign{\medskip}
	\frac{N_{1r}\mu_1}{N_{1p}(\gamma+\lambda_1+\mu_1)}&
	\frac{N_{11}\mu_1}{N_{1p}(\gamma+\lambda_1+\mu_1)}&
	\frac{N_{12}(\gamma+\lambda_1)}{N_{2p}(\gamma+\lambda_1+\mu_1)}&
	\frac{N_{2r}(\gamma+\lambda_1)}{N_{2p}(\gamma+\lambda_1+\mu_1)}&
	\frac{N_{22}(\gamma+\lambda_1)}{N_{2p}(\gamma+\lambda_1+\mu_1)}&
	\frac{N_{21}\mu_1}{N_{1p}(\gamma+\lambda_1+\mu_1)}\\
	
	\noalign{\medskip}
	0&
	0&
	\frac{N_{12}}{N_{2p}}&
	\frac{N_{2r}}{N_{2p}}&
	\frac{N_{22}}{N_{2p}}&
	0\\
	
	\noalign{\medskip}
	\frac{N_{1r}\lambda_2}{N_{1p}(\gamma+\lambda_2+\mu_2)}&
	\frac{N_{11}\lambda_2}{N_{1p}(\gamma+\lambda_2+\mu_2)}&
	\frac{N_{12}(\gamma+\mu_2)}{N_{2p}(\gamma+\lambda_2+\mu_2)}&
	\frac{N_{2r}(\gamma+\mu_2)}{N_{2p}(\gamma+\lambda_2+\mu_2)}&
	\frac{N_{22}(\gamma+\mu_2)}{N_{2p}(\gamma+\lambda_2+\mu_2)}&
	\frac{N_{21}\lambda_2}{N_{1p}(\gamma+\lambda_2+\mu_2)}\\
	
	\noalign{\medskip}
	\frac{N_{1r}(\gamma+\lambda_2)}{N_{1p}(\gamma+\lambda_2+\mu_2)}& 
	\frac{N_{11}(\gamma+\lambda_2)}{N_{1p}(\gamma+\lambda_2+\mu_2)}&
	\frac{N_{12}\mu_2}{N_{1p}(\gamma+\lambda_2+\mu_2)}&
	\frac{N_{2r}\mu_2}{N_{1p}(\gamma+\lambda_2+\mu_2)}&
	\frac{N_{22}\mu_2}{N_{1p}(\gamma+\lambda_2+\mu_2)}&
	\frac{N_{21}(\gamma+\lambda_2)}{N_{1p}(\gamma+\lambda_2+\mu_2)}
	
	\end {array} \right] 
	\]}
	One can check that one has $M^\top U={\cal R}U$ where $U=(1,1,1,1,1,1)^\top$. As $U$ is a positive vector, we deduce from the Perron-Frobenius Theorem that one has $\rho(M)=\rho(M^T)={\cal R}$, which ends the proof.
\end{proof}

	\begin{remark}
		\label{remark}
		More generally, the next-generation matrix $M=FV^{-1}$ can be shown to have a rank equal to the number $n$ of patches, and that its Perron vector can be expressed as a convex combination of a family of orthogonal vectors in the image of $M$. This implies that the positive eigenvalue of $M$ (i.e. the reproduction number) is also the positive eigenvalue of the $n$-dimensional positive matrix given by the decomposition of the image of this vectors by the matrix $M$. \\
		Alternatively, one may consider the epidemic spread in a virgin population as a Markov process, to determine the expected numbers of secondary cases in each patch, and obtain this $n\times n$ matrix, as described in \cite{Diekman_etal2013}. This method consists in a {\em first-step analysis} by determining the mean residence times of an infected individual of each group in each of the patches. Then, for a given patch the expected numbers of new infected present in each path are given by the products of the mean residence times by the transmission rate, averaged by the constant distribution given in \eqref{Nlimit}. \\
		This explains why the formula \eqref{R012} takes the expression of a root of the characteristic polynomial of a $2$ by $2$ matrix.
	\end{remark}
\begin{remark}
	The explicit expression \eqref{R012} of the epidemic threshold given in Proposition \ref{mainprop} is also relevant in absence of permanently resident populations, which has not been yet provided explicitly in the literature (up to our knowledge).
\end{remark}

\begin{corollary}
	\label{corominmax}
	One has
	\[
	\min\left({\cal R}_1,	{\cal R}_2\right) \leq {\cal R}_{1,2} \leq
	\max\left({\cal R}_1,	{\cal R}_2\right) .
	\]
\end{corollary}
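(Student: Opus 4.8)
The plan is to exploit the fact, already established inside the proof of Proposition \ref{mainprop}, that ${\cal R}_{1,2}$ is nothing but the dominant eigenvalue of the $2\times 2$ nonnegative matrix
\[
\tilde{Q}=\left(\begin{array}{cc} q_{11} & q_{12}\\ q_{21} & q_{22}\end{array}\right).
\]
Indeed, the expression derived for $\ell$ in the proof is exactly the larger root of the characteristic polynomial $\lambda^2-(q_{11}+q_{22})\lambda+(q_{11}q_{22}-q_{12}q_{21})$, and since $q_{12},q_{21}\geq 0$ the discriminant is nonnegative, so $\tilde{Q}$ being nonnegative its spectral radius $\rho(\tilde{Q})$ coincides with this larger real eigenvalue, i.e. with ${\cal R}_{1,2}$ from \eqref{R012}. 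It therefore suffices to locate $\rho(\tilde{Q})$ relative to ${\cal R}_1$ and ${\cal R}_2$.

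The key computation is that the two column sums of $\tilde{Q}$ are \emph{precisely} ${\cal R}_1$ and ${\cal R}_2$. Adding $q_{11}$ and $q_{21}$ from \eqref{qij}, the common factor ${\cal R}_1$ comes out and the coefficients of $N_{11}/N_{1p}$ and $N_{21}/N_{1p}$ collapse through the telescoping identities
\[
\frac{\gamma+\mu_1}{\gamma+\lambda_1+\mu_1}+\frac{\lambda_1}{\gamma+\lambda_1+\mu_1}=1,\qquad \frac{\gamma+\lambda_2}{\gamma+\lambda_2+\mu_2}+\frac{\mu_2}{\gamma+\lambda_2+\mu_2}=1,
\]
so that $q_{11}+q_{21}={\cal R}_1\,(N_{1r}+N_{11}+N_{21})/N_{1p}={\cal R}_1$, the last step being the very definition $N_{1p}=N_{1r}+N_{11}+N_{21}$. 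The symmetric computation (using $N_{2p}=N_{2r}+N_{22}+N_{12}$) gives $q_{12}+q_{22}={\cal R}_2$.

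Finally I would invoke the standard Perron--Frobenius estimate (see \cite{BermanPlemmons}) that the spectral radius of a nonnegative matrix lies between its smallest and largest column sums — equivalently, applied to $\tilde{Q}^\top$, whose row sums are the column sums of $\tilde{Q}$, so that no irreducibility is needed for the weak inequalities. This immediately yields
\[
\min({\cal R}_1,{\cal R}_2)\leq \rho(\tilde{Q})={\cal R}_{1,2}\leq \max({\cal R}_1,{\cal R}_2),
\]
which is the claim; for a fully self-contained version one can instead check this bracketing by hand from the explicit quadratic root.

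The calculation itself is routine, so the only point demanding care — and really the heart of the argument — is recognizing that the quantity to bound is not the row sums of $\tilde{Q}$ (which mix ${\cal R}_1$ and ${\cal R}_2$ and do not simplify) but its \emph{column} sums, which collapse exactly onto the intrinsic reproduction numbers thanks to the population-balance identity $N_{ip}=N_{ir}+N_{ii}+N_{ji}$. Everything else is a direct appeal to results already available in the excerpt.
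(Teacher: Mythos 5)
Your proof is correct, but it takes a genuinely different route from the paper's. The paper stays with the full $6\times 6$ next-generation matrix: writing $M({\cal R}_1,{\cal R}_2)$ for $FV^{-1}$ as a function of the two intrinsic reproduction numbers, it uses the entrywise sandwich $M({\cal R}_-,{\cal R}_-)\leq M({\cal R}_1,{\cal R}_2)\leq M({\cal R}_+,{\cal R}_+)$, the monotonicity of the spectral radius on nonnegative matrices, and the identification $\rho(M({\cal R}_\pm,{\cal R}_\pm))={\cal R}_\pm$, which is exactly point iii of Proposition \ref{mainprop}. You instead collapse everything onto the $2\times 2$ matrix $\tilde Q=(q_{ij})$, observe that \eqref{R012} is precisely its Perron root (the trace $q_{11}+q_{22}\geq 0$ guarantees the larger real root is indeed the spectral radius), establish the column-sum identities $q_{11}+q_{21}={\cal R}_1$ and $q_{22}+q_{12}={\cal R}_2$ via the population-balance relations $N_{ip}=N_{ir}+N_{ii}+N_{ji}$, and finish with the standard bound that the spectral radius of a nonnegative matrix lies between its extreme column sums (weak inequalities, no irreducibility needed). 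Both arguments are sound. What the paper's approach buys: it requires no further computation beyond Proposition \ref{mainprop}, and the monotonicity argument would extend verbatim to any number of patches, with no explicit threshold formula required. What yours buys: it is more elementary and self-contained at the level of the explicit formula \eqref{R012}, avoiding both point iii of Proposition \ref{mainprop} and the monotonicity of $\rho$; moreover, the identities $q_{11}+q_{21}={\cal R}_1$, $q_{22}+q_{12}={\cal R}_2$ that you isolate as the heart of the matter are exactly the ones the paper later asserts with only a ``one can check'' at the start of the proof of Corollary \ref{maincoro}, so your computation doubles as the verification of that step.
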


\begin{proof}
	Denote by $M({\cal R}_1,{\cal R}_2)$ the matrix $FV^{-1}$ for the parameters ${\cal R}_1$, ${\cal R}_2$, and let ${\cal R}_-:=\min\left({\cal R}_1,{\cal R}_2\right)$, ${\cal R}_+:=\max\left({\cal R}_1,{\cal R}_2\right)$.
	From the expression of the non-negative matrices $M$, one gets
	\[
	M({\cal R}_-,{\cal R}_-) \leq M({\cal R}_1,{\cal R}_2) \leq M({\cal R}_+,{\cal R}_+)
	\]
	which implies (see for instance \cite{BermanPlemmons}) the inequalities
	\[
	\rho(M({\cal R}_-,{\cal R}_-)) \leq \rho(M({\cal R}_1,{\cal R}_2)) \leq \rho(M({\cal R}_+,{\cal R}_+))
	\]
	and thus
	\[
	{\cal R}_- \leq {\cal R}_{1,2} \leq {\cal R}_+ .
	\]
\end{proof}

Alternatively, the number ${\cal R}_{1,2}$ can be determined as follows.

\begin{corollary}
	\label{maincoro}
	Assume ${\cal R}_2>{\cal R}_1$. Then, one has
	
	\begin{equation}
	\label{R012alpha}
	{\cal R}_{1,2}=\alpha {\cal R}_1 + (1-\alpha){\cal R}_2
	\end{equation}
	where $\alpha \in [0,1)$ is the smallest root of the polynomial
	
	\begin{equation*}
	P(\alpha)=\alpha^2({\cal R}_2-{\cal R}_1)-\alpha({\cal R}_2-{\cal R}_1+q_{12}+q_{21})+q_{12}
	\end{equation*}
\end{corollary}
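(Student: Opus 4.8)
The plan is to reuse the reduced two-dimensional eigenvalue problem already isolated in the proof of Proposition \ref{mainprop}. There the Perron eigenvector of $Q$ is sought in the form $X=\alpha Y+(1-\alpha)Z$ with $\alpha\in(0,1)$, and the eigenvalue equation collapses to the system \eqref{sys_alpha}, i.e.\ to the eigenvalue problem for the $2\times 2$ matrix $\hat Q$ with entries $q_{ij}$ ($i,j\in\{1,2\}$) acting on $(\alpha,1-\alpha)^\top$, with eigenvalue $\ell={\cal R}_{1,2}$. The single computational input I would establish first is the algebraic identity
\[
q_{11}+q_{21}={\cal R}_1, \qquad q_{12}+q_{22}={\cal R}_2 ,
\]
read off from \eqref{qij}: summing the two entries of the first column, the two coefficients of $N_{11}$ (namely $\frac{\gamma+\mu_1}{\gamma+\lambda_1+\mu_1}$ and $\frac{\lambda_1}{\gamma+\lambda_1+\mu_1}$) add up to $1$, and likewise those of $N_{21}$, leaving ${\cal R}_1\frac{N_{1r}+N_{11}+N_{21}}{N_{1p}}={\cal R}_1$ since $N_{1p}=N_{1r}+N_{11}+N_{21}$; the second column is treated symmetrically. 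In matrix form this says $\hat Q$ has column sums ${\cal R}_1$ and ${\cal R}_2$, that is $(1,1)\hat Q=({\cal R}_1,{\cal R}_2)$.

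With this identity the formula \eqref{R012alpha} is immediate: left-multiplying the eigenvalue equation $\hat Q(\alpha,1-\alpha)^\top=\ell\,(\alpha,1-\alpha)^\top$ by the row vector $(1,1)$ gives $\alpha{\cal R}_1+(1-\alpha){\cal R}_2=\ell\big(\alpha+(1-\alpha)\big)=\ell={\cal R}_{1,2}$. To recover the polynomial, I would substitute this value of $\ell$ together with $q_{11}={\cal R}_1-q_{21}$ into the first scalar equation of \eqref{sys_alpha}, namely $\alpha q_{11}+(1-\alpha)q_{12}=\alpha\ell$; after expanding and collecting powers of $\alpha$ one lands exactly on $P(\alpha)=0$. (Equivalently, one checks that the substitution $r=(1-\alpha)/\alpha$ carries the quadratic $r^2q_{12}+r(q_{11}-q_{22})-q_{21}=0$ from the proof of Proposition \ref{mainprop}, after multiplication by $\alpha^2$, into $P(\alpha)$.)

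It then remains to identify the relevant root. Since ${\cal R}_2>{\cal R}_1$ the leading coefficient of $P$ is positive, and from the identity above $P(0)=q_{12}\ge 0$ while $P(1)=-q_{21}\le 0$; hence $P$ has two real roots, the smaller in $[0,1]$ and the larger in $[1,+\infty)$. To see that ${\cal R}_{1,2}$ corresponds to the smaller one, note that $\ell\mapsto\alpha=({\cal R}_2-\ell)/({\cal R}_2-{\cal R}_1)$ is strictly decreasing, and ${\cal R}_{1,2}$ is, by its definition \eqref{R012}, the larger of the two eigenvalues of $\hat Q$, so it yields the smaller $\alpha$. The only point requiring care, and the main (mild) obstacle, is to confirm that both eigenvalues of $\hat Q$ genuinely correspond to roots of $P$, i.e.\ that the second eigenvector does not have zero coordinate sum, which would break the normalization $\alpha+(1-\alpha)=1$. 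A short check settles this: a vector proportional to $(1,-1)$ can be an eigenvector of $\hat Q$ only if $q_{11}-q_{12}=q_{22}-q_{21}$, which via the column-sum identity forces ${\cal R}_1={\cal R}_2$, excluded by hypothesis. Consequently both eigenvalues give admissible roots, ${\cal R}_{1,2}$ is the smallest root $\alpha\in[0,1)$, and \eqref{R012alpha} holds.
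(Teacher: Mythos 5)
Your derivation of the core of the corollary matches the paper's own proof: the column-sum identity $q_{11}+q_{21}={\cal R}_1$, $q_{22}+q_{12}={\cal R}_2$, the formula \eqref{R012alpha} (left-multiplying the eigenvalue equation by $(1,1)$ is the same as summing the two equations of \eqref{sys_alpha}), the elimination leading to $P(\alpha)=0$, and the sign analysis $P(0)=q_{12}\ge 0$, $P(1)=-q_{21}\le 0$ are all exactly the paper's steps. Your way of identifying ${\cal R}_{1,2}$ with the smaller root --- the decreasing affine correspondence $\ell\mapsto\alpha=({\cal R}_2-\ell)/({\cal R}_2-{\cal R}_1)$ between eigenvalues of $\hat Q$ and roots of $P$, plus the check that no eigenvector of $\hat Q$ can be proportional to $(1,-1)$ when ${\cal R}_1\neq{\cal R}_2$ --- is a valid variant of the paper's route, which instead invokes Corollary \ref{corominmax} to place $\alpha$ in $[0,1]$.

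There is, however, a genuine gap at the very end: you assert that the smallest root lies in $[0,1)$, but everything you prove only yields $[0,1]$; nothing in your argument excludes $\alpha=1$. This is not a pedantic point given only what you have established about the $q_{ij}$ (non-negativity and the column sums): if one had $q_{21}=0$ together with $q_{12}\ge{\cal R}_2-{\cal R}_1>0$, then $P(\alpha)=(\alpha-1)\bigl(\alpha({\cal R}_2-{\cal R}_1)-q_{12}\bigr)$, whose smallest root is exactly $1$, the larger eigenvalue of $\hat Q$ is ${\cal R}_1$, attained precisely at $\alpha=1$, and the conclusion ``$\alpha\in[0,1)$'' would be false. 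What rules this configuration out is a structural property of \eqref{qij} that you never invoke: because the commuter populations are at the equilibrium \eqref{Nlimit}, one has $N_{11}\lambda_1=N_{12}\mu_1$ and $N_{21}\mu_2=N_{22}\lambda_2$, whence $q_{12}=\frac{{\cal R}_2 N_{1p}}{{\cal R}_1 N_{2p}}\,q_{21}$; thus $q_{21}=0$ forces $q_{12}=0$, in which case $P(\alpha)=({\cal R}_2-{\cal R}_1)\alpha(\alpha-1)$ has smallest root $0$, and ${\cal R}_{1,2}=\max(q_{11},q_{22})={\cal R}_2$ indeed corresponds to $\alpha=0$, not $1$. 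This is exactly the content of the paper's closing step, phrased there as a contradiction: $\alpha=1$ would give $q_{21}=0$, hence $\lambda_1=0$, $\mu_2=0$, hence $N_{12}=N_{22}=0$, so $q_{11}={\cal R}_1$, $q_{22}={\cal R}_2$ and \eqref{R012} yields ${\cal R}_{1,2}={\cal R}_2$, incompatible with the value ${\cal R}_1$ that $\alpha=1$ produces. You need to add this (or an equivalent) exclusion of $\alpha=1$ to fully prove the statement as written.
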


\begin{proof}
	One can check, from expressions \eqref{qij}, that one has $q_{11}+q_{21}={\cal R}_1$ and $q_{22}+q_{12}={\cal R}_2$. Then, from \eqref{sys_alpha}, one get 
	
	\begin{equation}
	\label{R012alpha}
	{\cal R}_{1,2}=l=\alpha {\cal R}_1 + (1-\alpha){\cal R}_2
	\end{equation}
	where $\alpha$ is a root of the polynomial $P$ obtained from \eqref{sys_alpha} by eliminating $l$, that is
	
	\begin{equation*}
	P(\alpha)=\alpha^2({\cal R}_2-{\cal R}_1)-\alpha({\cal R}_2-{\cal R}_1+q_{12}+q_{21})+q_{12}
	\end{equation*}
	From Corollary \ref{corominmax}, we know that $\alpha$ belongs to $[0,1]$.
	Note that one has $P(0)=q_{12}\geq 0$ and $P(1)=-q_{21}\leq 0$. Therefore, when ${\cal R}_2-{\cal R}_1>0$,
	$P$ admits exactly one root in $[0,1)$ and another one in $[1,\to)$. However, if $\alpha=1$ one should have $q_{21}=0$ and thus $\lambda_1=0$, $\mu_2=0$, which implies $N_{11}=N_{1c}$, $N_{12}=0$, $N_{22}=0$, $N_{21}=N_{2c}$. Then, one obtains $q_{11}={\cal R}_1$, $q_{22}={\cal R}_2$ and from the expression \eqref{R012} on gets ${\cal R}_{1,2}=\max({\cal R}_1,{\cal R}_2)={\cal R}_2$ which contradicts $\alpha=1$. We conclude that $\alpha$ belongs to $[0,1)$ and is thus the smallest root of $P$.
\end{proof}

\begin{remark}
	When there is no communication between patches (that is $N_{1r}=N_{1p}=N_1$, $N_{2r}=N_{2p}=N_2$), one has $q_{21}=0$ and $q_{12}=0$. If ${\cal R}_2>{\cal R}_1$, resp.~${\cal R}_1>{\cal R}_2$, one has $\alpha=0$, resp.~$\alpha=1$, which gives
	\[
	{\cal R}_{1,2} =\max({\cal R}_1,{\cal R}_2) .
	\]
\end{remark}

We look now for a characterization of the minimum value of the threshold ${\cal R}_{1,2}$.

\section{Minimization of the epidemic threshold}
\label{secmin}

In this section, we assume that the mixing is fast compared to the recovery rate (as its is often considered in the literature), which amounts to have numbers $\lambda_i$, $\mu_i$ large compared to $\gamma$. Our objective is to study how the proportions of commuters in the populations impact the value of ${\cal R}_{1,2}$.

Given ${\cal R}_1$, ${\cal R}_2$, we consider the approximation $\tilde {\cal R}_{1,2}$ of the threshold ${\cal R}_{1,2}$ which consists in keeping $\gamma=0$ in the  expressions \eqref{qij}.
For convenience, we posit the numbers
\[
\eta_i:= \frac{\lambda_i}{\lambda_i+\mu_i} \in (0,1) \qquad (i=1,2)
\]
One has a first result about the variations of $\tilde {\cal R}_{1,2}$ with respect to $N_{1c}$, $N_{2c}$.

\begin{proposition}
	\label{prop_dR12}
	Fix parameters $N_i$, $\beta_i$, $\gamma$, $\lambda_i$, $\mu_i$ ($i=1,2$) such that
	${\cal R}_2>{\cal R}_1$.
	\begin{enumerate}[i.]
		
		\item For any $N_{1c} \in (0,N_1)$, the map $N_{2c} \mapsto \tilde {\cal R}_{1,2}(N_{1c},N_{2c})$ is decreasing.
		
		\item The map $N_{1c} \mapsto \tilde {\cal R}_{1,2}(N_{1c},N_{2c})$ is increasing at $(N_{1c},N_{2c})$ when
		\begin{equation}
		\label{cond+a}
		\eta_2(1-\eta_2)N_{2c} > (1-\eta_1)(N_2-\eta_2N_{2c})
		\end{equation}
		
		\item 
		The map $N_{1c} \mapsto \tilde {\cal R}_{1,2}(N_{1c},N_{2c})$ is increasing, resp.~decreasing, at $(N_{1c},N_{2c})$ if the numbers $A$ and $B$ are negative, resp.~positive, where
		
		\begin{align*}
		& A:={\cal R}_2 \frac{\frac{N_2}{2}-\eta_1(\frac{1}{2}-\eta_1)N_{1c}-(\frac{3}{2}-\eta_2)\eta_2N_{2c}}{N_2-\eta_2N_{2c}+\eta_1N_{1c}}\\
		& \hspace{30mm}
		-{\cal R}_1 \frac{\frac{N_1}{2}-(\frac{3}{2}-\eta_1)\eta_1N_{1c}-\eta_2(\frac{1}{2}-\eta_2)N_{2c}}{N_1-\eta_1N_{1c}+\eta_2N_{2c}}	, \\
		& B := {\cal R}_2
		\frac{(1-\eta_1)(N_2-\eta_2N_{2c})-\eta_2(1-\eta_2)N_{2c}}{(N_2-\eta_2N_{2c}+\eta_1N_{1c})^2}\\
		& \hspace{30mm} 
		-{\cal R}_1\frac{(1-\eta_1)(N_1+\eta_2N_{2c})+\eta_2(1-\eta_2)N_{2c}}{(N_1-\eta_1N_{1c}+\eta_2N_{2c})^2}
		\end{align*}
	\end{enumerate}	
\end{proposition}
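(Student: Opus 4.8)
The plan is to read $\tilde{\cal R}_{1,2}$ off as the Perron root $\ell$ of the $2\times2$ matrix $\tilde Q=(q_{ij})$ obtained from \eqref{qij} by keeping $\gamma=0$, and then to differentiate that root. Writing $K:=\eta_1(1-\eta_1)N_{1c}+\eta_2(1-\eta_2)N_{2c}$, the $\gamma=0$ forms of \eqref{qij} collapse to $q_{12}={\cal R}_2\,K/N_{2p}$ and $q_{21}={\cal R}_1\,K/N_{1p}$, with $N_{1p}=N_1-\eta_1N_{1c}+\eta_2N_{2c}$ and $N_{2p}=N_2+\eta_1N_{1c}-\eta_2N_{2c}$; in particular the two off-diagonal entries share the common numerator $K$. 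By Corollary \ref{maincoro} the diagonal entries satisfy $q_{11}+q_{21}={\cal R}_1$ and $q_{22}+q_{12}={\cal R}_2$, quantities that are \emph{constant} in $(N_{1c},N_{2c})$; hence for $\theta\in\{N_{1c},N_{2c}\}$ one has $\partial_\theta q_{11}=-\partial_\theta q_{21}$ and $\partial_\theta q_{22}=-\partial_\theta q_{12}$, so only two independent derivatives ever enter.

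First I would establish a master formula for $\partial_\theta\ell$. Differentiating the characteristic relation $(\ell-q_{11})(\ell-q_{22})=q_{12}q_{21}$ (equivalently \eqref{R012}) and using the two constancy identities gives, with $W:=\ell-\tfrac12(q_{11}+q_{22})=\sqrt{\tfrac14(q_{22}-q_{11})^2+q_{12}q_{21}}>0$,
\[
2W\,\partial_\theta\ell=\partial_\theta q_{21}\,({\cal R}_2-\ell)+\partial_\theta q_{12}\,({\cal R}_1-\ell)=\frac{(\ell-{\cal R}_1)\bigl[\partial_\theta q_{21}\,(\ell-q_{22})-\partial_\theta q_{12}\,q_{21}\bigr]}{q_{21}} .
\]
At an interior point the matrix $\tilde Q$ is irreducible ($q_{12}q_{21}>0$), so Corollary \ref{corominmax} holds strictly, ${\cal R}_1<\ell<{\cal R}_2$, and $\mathrm{sign}(\partial_\theta\ell)$ equals the sign of the bracket. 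A direct computation of the two derivatives then shows $\partial_{N_{1c}}q_{21}>0$ and $\partial_{N_{2c}}q_{12}>0$ unconditionally, whereas $\partial_{N_{1c}}q_{12}$ has the sign of $(1-\eta_1)(N_2-\eta_2N_{2c})-\eta_2(1-\eta_2)N_{2c}$, so that $\partial_{N_{1c}}q_{12}<0$ is exactly condition \eqref{cond+a}.

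Points i and ii then follow quickly. For ii, under \eqref{cond+a} one has $\partial_{N_{1c}}q_{12}<0$ and $\partial_{N_{1c}}q_{21}>0$, so in $2W\partial_{N_{1c}}\ell=\partial_{N_{1c}}q_{21}({\cal R}_2-\ell)+\partial_{N_{1c}}q_{12}({\cal R}_1-\ell)$ the first term is $\ge0$ and the second is $>0$ (because ${\cal R}_1-\ell<0$), giving $\partial_{N_{1c}}\ell>0$. For i, the common‑numerator structure is decisive: $q_{12}/q_{21}=({\cal R}_2/{\cal R}_1)\,N_{1p}/N_{2p}$ is increasing in $N_{2c}$ since $N_{1p}$ increases and $N_{2p}$ decreases, whence $\partial_{N_{2c}}q_{12}\,q_{21}>\partial_{N_{2c}}q_{21}\,q_{12}$; combining this with $\ell-q_{22}\le q_{12}$ (from $\ell\le{\cal R}_2$) makes the bracket $\partial_{N_{2c}}q_{21}(\ell-q_{22})-\partial_{N_{2c}}q_{12}\,q_{21}$ negative whether or not $\partial_{N_{2c}}q_{21}$ is positive, so $\tilde{\cal R}_{1,2}$ decreases in $N_{2c}$.

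For iii the task is to express $A$ and $B$ intrinsically. From the explicit entries one checks the simplifications $A=\tfrac12(q_{21}+q_{22}-q_{11}-q_{12})$ and $\eta_1B=\partial_{N_{1c}}q_{12}-\partial_{N_{1c}}q_{21}$; moreover the algebraic identity $W^2-\bigl(\tfrac{q_{12}+q_{21}}{2}\bigr)^2=\tfrac{{\cal R}_2-{\cal R}_1}{2}\,A$ shows that $\mathrm{sign}(A)=\mathrm{sign}\!\bigl(\ell-\tfrac{{\cal R}_1+{\cal R}_2}{2}\bigr)$, so $A$ records on which side of the midpoint $\ell$ lies while $B$ records the sign of $\partial_{N_{1c}}q_{12}-\partial_{N_{1c}}q_{21}$. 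A short case analysis of the symmetric master formula then closes the argument: if $A<0$ and $B<0$ then ${\cal R}_2-\ell>\ell-{\cal R}_1\ge0$ and $\partial_{N_{1c}}q_{12}<\partial_{N_{1c}}q_{21}$, which forces $\partial_{N_{1c}}\ell>0$; if $A>0$ and $B>0$ both inequalities reverse and $\partial_{N_{1c}}\ell<0$. I expect the main obstacle to be the two nonobvious reductions that make everything routine afterwards: producing the closed‑form derivative of the Perron root (the master formula above), and recognizing that $A$ is precisely the indicator of the position of $\ell$ relative to $\tfrac{{\cal R}_1+{\cal R}_2}{2}$; together with the common‑numerator monotonicity that drives point i, these turn the remaining sign bookkeeping into elementary estimates.
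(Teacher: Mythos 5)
Your proof is correct and is essentially the paper's own argument in different variables: the paper differentiates the quadratic $P(\alpha)$ for the convex weight $\alpha$ of Corollary \ref{maincoro} (with $\ell=\alpha {\cal R}_1+(1-\alpha){\cal R}_2$, so your weights ${\cal R}_2-\ell$ and $\ell-{\cal R}_1$ are exactly $\alpha({\cal R}_2-{\cal R}_1)$ and $(1-\alpha)({\cal R}_2-{\cal R}_1)$), reduces everything to the sign of $\sigma_i=(1-\alpha)\frac{\partial q_{12}}{\partial N_{ic}}-\alpha\frac{\partial q_{21}}{\partial N_{ic}}$, and then uses the same explicit derivative formulas, the same common-numerator relation between $q_{12}$ and $q_{21}$ for point i, and the same identifications $A<0 \Leftrightarrow \alpha>\frac{1}{2}$ (your midpoint criterion for $\ell$) and $B<0 \Leftrightarrow \frac{\partial q_{12}}{\partial N_{1c}}<\frac{\partial q_{21}}{\partial N_{1c}}$ for point iii. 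Your master formula and the difference-of-squares identity for $A$ are reparametrizations of these same steps, so the two proofs coincide in substance.
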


\begin{proof}
	Following Corollary \ref{maincoro}, one has
	
	\begin{equation}
	\label{R12approx}
	\tilde {\cal R}_{1,2}=\tilde \alpha {\cal R}_1 + (1-\tilde \alpha){\cal R}_2
	\end{equation}
	where $\tilde\alpha$ is the smallest root of the polynomial
	
	\begin{equation*}
	\tilde P(\alpha)=\alpha^2({\cal R}_2-{\cal R}_1)-\alpha({\cal R}_2-{\cal R}_1+\tilde q_{12}+\tilde q_{21})+\tilde q_{12}
	\end{equation*}
	where $\tilde q_{12}$, $\tilde q_{21}$ are the approximations of $q_{12}$, $q_{21}$ defined in \eqref{qij}.
	Let us note that one can write
	$N_{ii}=(1-\eta_i)N_{ic}$, $N_{ij}=\eta_i N_{ic}$ (for $j \neq i$) and also
	$N_{ip}=N_i-\eta_i N_{ic}+\eta_j N_{jc}$, which leads to the following expressions of $\tilde q_{12}$, $\tilde q_{21}$ 
	
	\begin{equation}
	\label{qijapprox}
	\tilde q_{21}={\cal R}_1 \frac{(1-\eta_1)\eta_1N_{1c}+\eta_2(1-\eta_2)N_{2c}}{N_1-\eta_1N_{1c}+\eta_2N_{2c}}, \quad 
	\tilde q_{12}={\cal R}_2 \frac{\eta_1(1-\eta_1)N_{1c}+(1-\eta_2)\eta_2N_{2c}}{N_2-\eta_2N_{2c}+\eta_1N_{1c}}
	\end{equation}
	For simplicity, we shall drop the notation $\tilde{ }\;$ in the rest of the proof.
	Note than $\alpha$ being the smallest root of $P$, it verifies
	
	\begin{equation}
	\label{upperboundalpha}
	\alpha < \frac{{\cal R}_2-{\cal R}_1+q_{12}+q_{21}}{2({\cal R}_2-{\cal R}_1)}
	\end{equation}
	Let us differentiate the equality $P(\alpha)=0$ with respect to $q_{12}$ and $q_{21}$:
	
	\begin{align*}
	& 2\alpha \frac{\partial\alpha}{\partial q_{12}}({\cal R}_2-{\cal R}_1) -\frac{\partial\alpha}{\partial q_{12}}({\cal R}_2-{\cal R}_1+q_{12}+q_{21}) -\alpha +1=0\\
	& 2\alpha \frac{\partial\alpha}{\partial q_{21}}({\cal R}_2-{\cal R}_1) -\frac{\partial\alpha}{\partial q_{21}}({\cal R}_2-{\cal R}_1+q_{12}+q_{21}) -\alpha =0
	\end{align*}
	which gives
	
	\begin{align*}
	&  \frac{\partial\alpha}{\partial q_{12}} =\frac{1-\alpha}{{\cal R}_2-{\cal R}_1+q_{12}+q_{21}-2\alpha({\cal R}_2-{\cal R}_1)}\\
	& \frac{\partial\alpha}{\partial q_{21}} =\frac{-\alpha}{{\cal R}_2-{\cal R}_1+q_{12}+q_{21}-2\alpha({\cal R}_2-{\cal R}_1)}
	\end{align*}
	Then, one can write
	
	\begin{equation*}
	\frac{\partial \alpha}{\partial N_{ic}}  = \frac{\partial \alpha}{\partial q_{12}}\frac{\partial q_{12}}{\partial N_{ic}}+
	\frac{\partial \alpha}{\partial q_{21}}\frac{\partial q_{21}}{\partial N_{ic}}\\
	= \frac{(1-\alpha)\frac{\partial q_{12}}{\partial N_{ic}}-\alpha\frac{\partial q_{21}}{\partial N_{ic}}}{{\cal R}_2-{\cal R}_1+q_{12}+q_{21}-2\alpha({\cal R}_2-{\cal R}_1)} \qquad (i=1, 2)
	\end{equation*}
	and from inequality \eqref{upperboundalpha}, we obtain that the signs of the derivatives $\frac{\partial \alpha}{\partial N_{ic}}$ are given by the sign of the numbers
	
	\begin{equation}
	\label{sigma}
	\sigma_i := (1-\alpha)\frac{\partial q_{12}}{\partial N_{ic}}-\alpha\frac{\partial q_{21}}{\partial N_{ic}} \qquad (i=1,2)
	\end{equation}
	We begin by the dependency with respect to $N_{2c}$. One has first
	\[
	\frac{\partial q_{12}}{\partial N_{2c}}={\cal R}_2\eta_2
	\frac{(1-\eta_2)( N_2+\eta_1N_{1c})+\eta_1(1-\eta_1)N_{1c}}{(N_2+\eta_1N_{1c}-\eta_2N_{2c})^2} >0
	\]
	Note that one has
	
	\begin{equation}
	\label{q21fq12}
	q_{21}=\frac{{\cal R}_1(N_2+\eta_1N_{1c}-\eta_2N_{2c})}{{\cal R}_2(N_1-\eta_1N_{1c}+\eta_2N_{2c})}q_{12}
	\end{equation}
	and thus
	\[
	\frac{\partial q_{21}}{\partial N_{2c}}=\frac{{\cal R}_1(N_2+\eta_1N_{1c}-\eta_2N_{2c})}{{\cal R}_2(N_1-\eta_1N_{1c}+\eta_2N_{2c})}\frac{\partial q_{12}}{\partial N_{2c}} -
	\frac{{\cal R}_1 \eta_2 (N_1+N_2)}{{\cal R}_2(N_1-\eta_1N_{1c}+\eta_2N_{2c})^2}q_{12}
	\]
	Then, one gets the inequality
	\[
	\sigma_2 > \left( 1 - \alpha - \alpha \frac{{\cal R}_1(N_2+\eta_1N_{1c}-\eta_2N_{2c})}{{\cal R}_2(N_1-\eta_1N_{1c}+\eta_2N_{2c})} \right)\frac{\partial q_{12}}{\partial N_{2c}}
	\]
	On another hand, one gets from $P(\alpha)=0$ the inequality
	\[
	(1-\alpha)q_{12}-\alpha q_{21}= \alpha(1-\alpha)({\cal R}_2-{\cal R}_1)> 0
	\]
	and with \eqref{q21fq12}
	\[
	(1-\alpha)q_{12}-\alpha q_{21}= \left( 1 - \alpha - \alpha \frac{{\cal R}_1(N_2+\eta_1N_{1c}-\eta_2N_{2c})}{{\cal R}_2(N_1-\eta_1N_{1c}+\eta_2N_{2c})} \right)q_{12} >0
	\]
	We then conclude that $\sigma_2$ is positive, and from \eqref{R12approx} we deduce that the map $N_{2c} \mapsto  {\cal R}_{1,2}$ is decreasing. This proves the point i.
	
	\medskip
	
	We study now the dependency with respect to $N_{1c}$. A calculation of the partial derivative gives
	
	\begin{equation}
	\label{dq12dN1c}
	\frac{\partial q_{12}}{\partial N_{1c}}={\cal R}_2\eta_1
	\frac{(1-\eta_1)(N_2-\eta_2N_{2c})-\eta_2(1-\eta_2)N_{2c}}{(N_2-\eta_2N_{2c}+\eta_1N_{1c})^2}
	\end{equation}
	and
	
	\begin{equation}
	\label{dq21dN1c}
	\frac{\partial q_{21}}{\partial N_{1c}}={\cal R}_1\eta_1
	\frac{(1-\eta_1)(N_1+\eta_2N_{2c})+\eta_2(1-\eta_2)N_{2c}}{(N_1-\eta_1N_{1c}+\eta_2N_{2c})^2} >0
	\end{equation}
	When $\frac{\partial q_{12}}{\partial N_{1c}}<0$, we can conclude that $\sigma_1$ is negative and $ {\cal R}_{1,2}$ is thus increasing with respect to $N_{1c}$. This condition is equivalent to \eqref{cond+a}. This proves the point ii.
	When this last condition is not satisfied, having $\frac{\partial q_{12}}{\partial N_{1c}}<\frac{\partial q_{21}}{\partial N_{1c}}$  with  $\alpha>\frac{1}{2}$ is another sufficient condition to obtain $\sigma_1<0$ from expression \eqref{sigma}. However, having $\alpha  >\frac{1}{2}$ amounts to have $P(\frac{1}{2})>0$, that is
	\[
	\frac{{\cal R}_2-{\cal R}_1}{4}-\frac{{\cal R}_2-{\cal R}_1+q_{12}+q_{21}}{2}+q_{12}>0
	\]
	or equivalently
	\[
	\frac{{\cal R}_2}{2}-q_{12} < \frac{{\cal R}_1}{2} - q_{21}
	\]
	One can check that this last condition is equivalent to $A<0$ and that the condition $\frac{\partial q_{12}}{\partial N_{1c}}<\frac{\partial q_{21}}{\partial N_{1c}}$ is equivalent to $B<0$.
	In the same manner, having $A>0$ and $B>0$ implies $\alpha<\frac{1}{2}$ and $\frac{\partial q_{12}}{\partial N_{1c}}>\frac{\partial q_{21}}{\partial N_{1c}}$, which is a sufficient condition to have $\sigma_1>0$, and thus $ {\cal R}_{1,2}$ increasing with respect to $N_{1c}$. This proves the point iii.
\end{proof}

This result suggests that the map $N_{1c} \mapsto \tilde {\cal R}_{1,2}(N_{1c},N_{2c})$ is not necessarily monotonic, differently to the map $N_{2c} \mapsto \tilde {\cal R}_{1,2}(N_{1c},N_{2c})$. We show now that the possibilities of its variations are limited. 

\begin{proposition}
	\label{propmonotony}
	Under hypotheses of Proposition \ref{prop_dR12}, for each $N_{2c} \in (0,N_2)$ the map $N_{1c} \mapsto \tilde {\cal R}_{1,2}(N_{1c},N_{2c})$ possesses one of the three properties
	\begin{enumerate}[a.]
		\item it is decreasing on $(0,N_1)$,
		\item it is increasing on $(0,N_1)$,
		\item there exists $N_{1c}^\star \in (0,N_1)$ such that it is decreasing on $(0,N_{1c}^\star)$ and increasing on $(N_{1c}^\star,N_1)$.
	\end{enumerate}
\end{proposition}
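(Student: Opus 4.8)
The plan is to read off the sign of $\partial\tilde{\cal R}_{1,2}/\partial N_{1c}$ at each point of $(0,N_1)$ and to show that, as $N_{1c}$ grows, this sign can switch at most once and only from $-$ to $+$. Following the computation in the proof of Proposition \ref{prop_dR12}, recall from \eqref{R12approx} and \eqref{sigma} that $\partial\tilde{\cal R}_{1,2}/\partial N_{1c}$ has the sign of $-\sigma_1$; since ${\cal R}_2>{\cal R}_1$ and $\partial\alpha/\partial N_{1c}$ is a positive multiple of $\sigma_1$, the map is \emph{decreasing} exactly where $\sigma_1>0$ and \emph{increasing} exactly where $\sigma_1<0$. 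The first structural remark I would make is that, by \eqref{dq12dN1c}, the numerator of $\partial q_{12}/\partial N_{1c}$ is independent of $N_{1c}$, so (for fixed $N_{2c}$) this derivative keeps a constant sign on the whole interval, while by \eqref{dq21dN1c} one has $\partial q_{21}/\partial N_{1c}>0$ throughout. If $\partial q_{12}/\partial N_{1c}\le 0$, i.e.\ condition \eqref{cond+a} holds, then $\sigma_1=(1-\alpha)\,\partial q_{12}/\partial N_{1c}-\alpha\,\partial q_{21}/\partial N_{1c}<0$ on all of $(0,N_1)$ because $\alpha,1-\alpha\in(0,1)$; this is case b and is disposed of immediately.

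It remains to treat the case $\partial q_{12}/\partial N_{1c}>0$. Dividing the inequality $\sigma_1>0$ by this positive quantity, I would rewrite the sign condition as $\sigma_1>0\iff\alpha<\hat\alpha$, where $\hat\alpha:=\big(\partial q_{12}/\partial N_{1c}\big)\big/\big(\partial q_{12}/\partial N_{1c}+\partial q_{21}/\partial N_{1c}\big)\in(0,1)$. The point is that $\hat\alpha$ is a simple, explicitly monotone function of $N_{1c}$: from \eqref{dq12dN1c}--\eqref{dq21dN1c} the ratio $\big(\partial q_{21}/\partial N_{1c}\big)\big/\big(\partial q_{12}/\partial N_{1c}\big)$ equals a positive constant times $(N_{2p}/N_{1p})^2$, and since $N_{2p}=N_2-\eta_2N_{2c}+\eta_1N_{1c}$ increases while $N_{1p}=N_1-\eta_1N_{1c}+\eta_2N_{2c}$ decreases as $N_{1c}$ grows, this ratio is strictly increasing; hence $\hat\alpha$ is strictly decreasing in $N_{1c}$.

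For the decisive step I would introduce $\psi(N_{1c}):=\hat\alpha(N_{1c})-\alpha(N_{1c})$, so that $\tilde{\cal R}_{1,2}$ is decreasing where $\psi>0$ and increasing where $\psi<0$. At any interior critical point one has $\psi=0$, hence $\sigma_1=0$, hence $\partial\alpha/\partial N_{1c}=0$ (being proportional to $\sigma_1$), while $\hat\alpha$ is strictly decreasing; therefore $\psi'=\hat\alpha'-\alpha'<0$ there. Thus every zero of $\psi$ is a strict sign change from $+$ to $-$, which forces the zeros to be isolated and, consequently, at most one to exist (after the unique crossing $\psi$ stays negative, as returning to $0$ would require an upward crossing that the sign of $\psi'$ forbids). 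Translating back gives precisely the trichotomy: $\psi>0$ on $(0,N_1)$ yields case a, $\psi<0$ on $(0,N_1)$ yields case b, and a single $N_{1c}^\star$ with $\psi>0$ before and $\psi<0$ after yields case c. The main obstacle is exactly the sign of $\psi'$ at a critical point, i.e.\ showing that every interior critical point of $\tilde{\cal R}_{1,2}$ is a strict local minimum; the decoupling $\psi=\hat\alpha-\alpha$ is what makes this tractable, since the vanishing of $\alpha'\propto\sigma_1$ at such points reduces $\operatorname{sign}(\psi')$ to the already-established monotonicity of $N_{2p}/N_{1p}$, thereby avoiding a direct and messy differentiation of $\sigma_1$ through the $N_{1c}$-dependent quadratic defining $\alpha$.
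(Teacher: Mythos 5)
Your proposal is correct and takes essentially the same approach as the paper: both reduce the monotonicity question to the sign of $\sigma_1$ and show that at any interior critical point, where $\partial\alpha/\partial N_{1c}=0$, the sign-determining quantity is strictly decreasing, so every critical point is a strict local minimum and hence there is at most one. Your key step establishing $\psi'<0$ via the monotonicity of $(N_{2p}/N_{1p})^2$ is algebraically the same computation as the paper's evaluation of $(1-\alpha)\,\partial^2 q_{12}/\partial N_{1c}^2-\alpha\,\partial^2 q_{21}/\partial N_{1c}^2$ at the critical point (using $\sigma_1=\bigl(\partial q_{12}/\partial N_{1c}+\partial q_{21}/\partial N_{1c}\bigr)\psi$), just packaged through the ratio $\hat\alpha$ rather than through second derivatives of $q_{12}$ and $q_{21}$.
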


\begin{proof}
	Fix $N_{2c} \in (0,N_2)$. If the map $N_{1c} \mapsto \tilde {\cal R}_{1,2}(N_{1c},N_{2c})$ is not monotonic, there exists  $\hat N_{1c} \in (0,N_1)$ such that $\frac{\partial \tilde {\cal R}_{1,2}}{\partial N_{1c}}(\hat N_{1c},N_{2c})=0$. 	For simplicity, we shall drop the notation $\tilde{ }\;$ in the rest of the proof.
	Following the proof of Proposition \ref{prop_dR12}, one has
	${\cal R}_{1,2}=\alpha {\cal R}_1 + (1-\alpha){\cal R}_2$ with
	\[
	\frac{\partial \alpha}{\partial N_{1c}}=\frac{(1-\alpha)\frac{\partial q_{12}}{\partial N_{1c}}-\alpha\frac{\partial q_{21}}{\partial N_{1c}}}{{\cal R}_2-{\cal R}_1+q_{12}+q_{21}-2\alpha({\cal R}_2-{\cal R}_1)}:=\frac{\sigma_1}{\nu}
	\]
	where $\nu>0$. Therefore, one has $\frac{\partial \alpha}{\partial N_{1c}}=0$ and $\sigma_1=0$ at $N_{1c}=\hat N_{1c}$, and thus
	\[
	\left.\frac{\partial^2 \alpha}{\partial N_{1c}^2}\right\vert_{N_{1c}=\hat N_{1c}}  =
	\left.\frac{\frac{\partial \sigma_1}{\partial N_{1c}}}{\nu}\right\vert_{N_{1c}=\hat N_{1c}} =
	\left.\frac{(1-\alpha)\frac{\partial^2 q_{12}}{\partial N_{1c}^2}-\alpha\frac{\partial^2 q_{21}}{\partial N_{1c}^2}}{\nu}\right\vert_{N_{1c}=\hat N_{1c}}
	\]
	From expressions \eqref{dq12dN1c} and \eqref{dq21dN1c}, a calculation of the partial derivatives gives
	\[
	\frac{\partial^2q_{12}}{\partial N_{1c}^2}=\frac{-2\eta_1\frac{\partial q_{12}}{\partial N_{1c}}}{N_1-\eta_1N_{1c}+\eta_2N_{2c}} , \quad
	\frac{\partial^2q_{21}}{\partial N_{1c}^2}=\frac{2\eta_1 \frac{\partial q_{21}}{\partial N_{1c}}}{N_2-\eta_2N_{2c}+\eta_1N_{1c}}
	\]
	where $\frac{\partial q_{21}}{\partial N_{1c}}>0$ and from $\sigma_1=0$ one gets $\frac{\partial q_{12}}{\partial N_{1c}}>0$ for $N_{1c}=\hat N_{1c}$. Finally, one obtains
	\[
	\frac{\partial^2 {\cal R}_{1,2}}{\partial N_{1c}^2}(\hat N_{1c},N_{2c})=-({\cal R}_2-{\cal R}_1)\frac{\partial^2 \alpha}{\partial N_{1c}^2}(\hat N_{1c},N_{2c})<0
	\]
	Consequently, any extremum of the map $N_{1c} \mapsto {\cal R}_{1,2}(N_{1c},N_{2c})$ is a local minimizer, which implies that this map has at most one local minimizer.
\end{proof}

\medskip

Finally, we give conditions for which the minimization of the threshold ${\cal R}_{1,2}$ presents a trichotomy.

\begin{proposition}
	\label{mainprop2}
	Let parameters $\beta_i$, $\gamma$ be such that ${\cal R}_2>{\cal R}_1$ and assume that $N_1$, $N_2$ satisfy  $N_1{\cal R}_2>N_2{\cal R}_1$.
	Then, provided that $\gamma$ is small enough compared to $\lambda_i$ and $\mu_i$, the function $(N_{1c},N_{2c}) \mapsto {\cal R}_{1,2}(N_{1c},N_{2c})$ admits an unique minimum at $(N_{1c}^\star,N_{2c}^\star)$ with $N_{2c}^\star=N_2$. Moreover, one has the following properties.
	\begin{enumerate}
		\item $N_{1c}^\star=0$ if $\eta_2 > 1-\eta_1$,
		\item $N_{1c}^\star=N_1$ if $\eta_1$ and $\eta_2$ are sufficiently small,
		\item there exists $\eta_1$, $\eta_2$ for which  $N_{1c}^\star \in (0,N_1)$.
	\end{enumerate}
\end{proposition}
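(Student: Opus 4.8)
Throughout I would work with the $\gamma=0$ approximation $\tilde{\cal R}_{1,2}$ used in Propositions \ref{prop_dR12} and \ref{propmonotony}, and recover the statement for the true threshold ${\cal R}_{1,2}$ when $\gamma$ is small by continuity of $\rho(FV^{-1})$ in the parameters, all the inequalities below being strict. The first step is to reduce the two–dimensional problem to a one–dimensional one. Fixing $N_{1c}$, point i.\ of Proposition \ref{prop_dR12} says $N_{2c}\mapsto\tilde{\cal R}_{1,2}$ is strictly decreasing, so its minimum on $(0,N_2]$ is attained only at $N_{2c}=N_2$; hence any global minimizer satisfies $N_{2c}^\star=N_2$ and it remains to minimize $N_{1c}\mapsto\tilde{\cal R}_{1,2}(N_{1c},N_2)$. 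By Proposition \ref{propmonotony} this single–variable map is decreasing, increasing, or decreasing–then–increasing, and in each case has a unique minimizer $N_{1c}^\star$ (namely $N_1$, $0$, or the interior stationary point); combined with strict monotonicity in $N_{2c}$ this yields the announced unique global minimum at $(N_{1c}^\star,N_2)$.

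To identify the three cases I would track the sign of $\sigma_1$ from \eqref{sigma}, recalling from Corollary \ref{maincoro} and the proof of Proposition \ref{prop_dR12} that $\tilde{\cal R}_{1,2}=\alpha{\cal R}_1+(1-\alpha){\cal R}_2$ is decreasing in $\alpha$ and that $\partial\tilde{\cal R}_{1,2}/\partial N_{1c}$ has the sign of $-\sigma_1$. For point 1, substituting $N_{2c}=N_2$ into the increasing criterion \eqref{cond+a} turns its two sides into $\eta_2(1-\eta_2)N_2$ and $(1-\eta_1)(1-\eta_2)N_2$, so after dividing by $(1-\eta_2)N_2>0$ the criterion collapses to $\eta_2>1-\eta_1$, independently of $N_{1c}$; hence the map is then increasing on all of $(0,N_1)$ and $N_{1c}^\star=0$. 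For point 2, I let $\eta_1,\eta_2\to0$: by \eqref{qijapprox} both $q_{12}$ and $q_{21}$ tend to $0$ uniformly in $N_{1c}\in[0,N_1]$, so $\alpha\to0$ uniformly, while \eqref{dq12dN1c} gives $\partial q_{12}/\partial N_{1c}>0$ (since $\eta_1+\eta_2<1$) of order $\eta_1$, which dominates $\alpha\,\partial q_{21}/\partial N_{1c}$. Thus $\sigma_1>0$ throughout, the map is decreasing, and $N_{1c}^\star=N_1$.

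Point 3 is where I expect the real difficulty. By the trichotomy it suffices to exhibit $\eta_1,\eta_2$ for which the map decreases near $N_{1c}=0$ and increases near $N_{1c}=N_1$, i.e.\ $\sigma_1(0)>0$ and $\sigma_1(N_1)<0$. I would fix a moderate $\eta_1$ and raise $\eta_2$ towards $1-\eta_1$: as $\eta_2\to(1-\eta_1)^-$ the factor $1-\eta_1-\eta_2$ in \eqref{dq12dN1c} forces $\partial q_{12}/\partial N_{1c}\to0^+$ at both endpoints, so $\sigma_1\to-\alpha\,\partial q_{21}/\partial N_{1c}<0$ there. The mechanism producing the interior minimum is that the left endpoint is more favorable to a decrease: at $N_{2c}=N_2$ the two derivatives \eqref{dq12dN1c}--\eqref{dq21dN1c} share numerators independent of $N_{1c}$ while the denominators $N_{2p}^2$ and $N_{1p}^2$ vary oppositely with $N_{1c}$, whence $\partial q_{12}/\partial N_{1c}$ is larger at $N_{1c}=0$ and $\partial q_{21}/\partial N_{1c}$ is smaller there, both pushing $\sigma_1(N_1)$ below $\sigma_1(0)$. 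Consequently $\sigma_1(N_1)$ should change sign at a smaller $\eta_2$ than $\sigma_1(0)$, opening a window on which $\sigma_1(0)>0>\sigma_1(N_1)$; the standing hypothesis $N_1{\cal R}_2>N_2{\cal R}_1$, equivalently ${\cal R}_2/N_2>{\cal R}_1/N_1$, is exactly what keeps $\sigma_1(0)>0$ on that window, being the leading balance in $\sigma_1(0)$ between $(1-\alpha){\cal R}_2/N_2$ and $\alpha{\cal R}_1/N_1$. The main obstacle is to make this endpoint comparison rigorous: since $\alpha$ itself depends on $(\eta_1,\eta_2)$ and on $N_{1c}$ through the root of the quadratic of Corollary \ref{maincoro}, controlling its variation between the two endpoints (rather than treating it as constant) is the delicate point, and this is precisely where both hypotheses ${\cal R}_2>{\cal R}_1$ and $N_1{\cal R}_2>N_2{\cal R}_1$ are consumed.
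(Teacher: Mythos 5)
Your reduction of the problem (minimize in $N_{2c}$ first, getting $N_{2c}^\star=N_2$ by point i of Proposition \ref{prop_dR12}, then apply the trichotomy of Proposition \ref{propmonotony} in $N_{1c}$), your treatment of point 1 (condition \eqref{cond+a} at $N_{2c}=N_2$ collapses to $\eta_2>1-\eta_1$, independently of $N_{1c}$), and the final $\gamma$-continuity step are all in line with the paper's proof. Your point 2 is correct but takes a different route: the paper invokes point iii of Proposition \ref{prop_dR12} and computes $\lim_{\eta_1,\eta_2\to 0}A=({\cal R}_2-{\cal R}_1)/2>0$ and $\lim_{\eta_1,\eta_2\to 0}B={\cal R}_2/N_2-{\cal R}_1/N_1>0$, the latter being exactly where the hypothesis $N_1{\cal R}_2>N_2{\cal R}_1$ is consumed; your direct asymptotics ($\alpha=O(\eta_1+\eta_2)$ uniformly while $\partial q_{12}/\partial N_{1c}$ is bounded below by a positive multiple of $\eta_1$, hence $\sigma_1>0$) is legitimate and, notably, does not need that hypothesis at all.

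The genuine gap is point 3, and you flag it yourself. Your window argument hinges on the endpoint comparison $\sigma_1(0)>\sigma_1(N_1)$ (so that $\sigma_1(N_1)$ turns negative at a strictly smaller $\eta_2$ than $\sigma_1(0)$), and this comparison is not established: since $\partial\alpha/\partial N_{1c}=\sigma_1/\nu$, in the very regime you are trying to exhibit (decreasing then increasing) the weight $\alpha$ increases and then decreases along $[0,N_1]$, so $\alpha(N_1)$ and $\alpha(0)$ cannot be ordered a priori; and because $\sigma_1=(1-\alpha)\frac{\partial q_{12}}{\partial N_{1c}}-\alpha\frac{\partial q_{21}}{\partial N_{1c}}$ is decreasing in $\alpha$ (both derivatives being positive here), an unfavorable ordering of $\alpha$ can cancel the denominator effect you invoke, so the "window" may a priori be empty. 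Relatedly, the hypothesis $N_1{\cal R}_2>N_2{\cal R}_1$ ends up nowhere rigorously used in your proposal, which is a symptom of the incompleteness. The paper closes point 3 with a different and much shorter idea that avoids endpoint analysis entirely: fix an \emph{arbitrary} $N_{1c}\in(0,N_1)$; in the regime of point 1 ($\eta_2>1-\eta_1$) one has $\frac{\partial\tilde{\cal R}_{1,2}}{\partial N_{1c}}(N_{1c},N_2)>0$, in the regime of point 2 (small $\eta_1,\eta_2$) one has $\frac{\partial\tilde{\cal R}_{1,2}}{\partial N_{1c}}(N_{1c},N_2)<0$; by continuity of this derivative in $(\eta_1,\eta_2)$ along a path joining the two regimes, there exist $\hat\eta_1,\hat\eta_2$ at which it vanishes at that very point, and Proposition \ref{propmonotony} then forces this critical point to be the unique minimizer, hence $N_{1c}^\star=N_{1c}\in(0,N_1)$. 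This intermediate-value argument in parameter space, combined with the trichotomy, is the missing step; with it, point 3 follows from points 1 and 2 with no control of $\alpha$ between endpoints.
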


\begin{proof}
	We first show that the announced properties are satisfied for the approximate function $\tilde {\cal R}_{1,2}$.
	
	\medskip
	
	From Propositions \ref{prop_dR12} and \ref{propmonotony}, we know that $\tilde {\cal R}_{1,2}$ admits an unique minimum at $(\hat N_{1c},\hat N_{2c})$ with $\hat N_{2c}=N_2$. For $N_{2c}=N_2$, the  condition \eqref{cond+a} simply writes $\eta_2 > 1-\eta_1$ which implies from point ii. of Proposition \ref{prop_dR12} that one has $\hat N_{1c}=0$ when this condition is fulfilled. This shows that point 1 is verified for the function  $\tilde {\cal R}_{1,2}$. 
	
	One obtains the limits
	\[
	\lim_{\eta_1, \eta_2 \to 0} A = \frac{{\cal R}_2}{2}- \frac{{\cal R}_1}{2}> 0, \quad 	\lim_{\eta_1, \eta_2 \to 0} B = \frac{{\cal R}_2}{N_2}- \frac{{\cal R}_1}{N_1}> 0
	\]
	which show that numbers $A$ and $B$ are positive when $\eta_1$, $\eta_2$ are small, and thus  one has $\hat N_{1c}=N_1$ from point iii of Proposition \ref{prop_dR12}. This shows that point 2 is verified for the function  $\tilde {\cal R}_{1,2}$.
	
	Take now any $N_{1c} \in (0,N_1)$. When  $\eta_2 > 1-\eta_1$, one has $\frac{\partial \tilde {\cal R}_{1,2}}{\partial N_{1c}}(N_{1c},N_2)>0$, and for $\eta_1$, $\eta_2$ small, $\frac{\partial \tilde {\cal R}_{1,2}}{\partial N_{1c}}(N_{1c},N_2)<0$ is verified. Then, by continuity of the function $\tilde {\cal R}_{1,2}$ with respect to parameters $\eta_1$, $\eta_2$, one deduce that the existence of values $\hat \eta_1$, $\hat \eta_2$ for which $\frac{\partial \tilde {\cal R}_{1,2}}{\partial N_1}(N_{1c},N_2)=0$. As the function $\tilde {\cal R}_{1,2}$  cannot have more than a local extremum (see Proposition \ref{propmonotony}), we deduce that $N_{1c}$ realizes the minimum of  the function $N_{1c} \mapsto \tilde {\cal R}_{1,2}(N_{1c},N_2)$ when $\eta_1=\hat \eta_1$ and $\eta_2=\hat \eta_2$. This shows that point 3 is verified for the function  $\tilde {\cal R}_{1,2}$.
	
	\medskip
	
	Finally, note that the exact threshold ${\cal R}_{1,2}$ amounts to replace in the expression of $\tilde q_{12}$,  $\tilde q_{21}$ the numbers $\eta_i$ by $\frac{\lambda_i+\gamma}{\lambda_i+\mu_i+\gamma}$ , which is continuous with respect to $\gamma$ and equal to $\eta_i$ for $\gamma=0$. By continuity of $\tilde {\cal R}_{1,2}$ with respect to $\tilde q_{12}$,  $\tilde q_{21}$ , we deduce that uniqueness of the minimizer of ${\cal R}_{1,2}$ and properties 1. to 3. are also fulfilled by  the function $(N_{1c},N_{2c})\mapsto {\cal R}_{1,2}$, provided that $\gamma$ is small enough.
\end{proof}

\section{Numerical illustration}
\label{secillu}

We consider two territories of same population size $N=N_1=N_2$ with different transmission rates such that one has ${\cal R}_1 < 1  < {\cal R}_2$ (values are given in Table \ref{tableparam}). Typically, some precautionary measures (such as social distance) are taken in the first territory so that the disease cannot spread in this territory if it is closed, while the epidemic can spread in the second territory in absence of communication with territory 1. We aim at studying how the epidemic can die out when commuting occur between territories, depending on the proportions of resident in each population, denoted
\[
p_i=:= \frac{N_{ir}}{N}= 1-\frac{N_{ic}}{N}, \quad (i=1,2)
\]
(in other words, how to obtain ${\cal R}_{1,2}<1$ playing with $p_1$, $p_2$). Note that when $N_1=N_2$, the threshold ${\cal R}_{1,2}$ depends on the proportions $p_1$, $p_2$ independently of $N$.

\begin{table}[ht!]
	\centering
	\begin{tabular}{|c|c|c||c|c|}
		\hline
		$\gamma$ & $\beta_1$ & $\beta_2$ & ${\cal R}_1$ & ${\cal R}_2$\\
		\hline\hline
		$0.3$ & $0.27$ & $0.33$ & $0.9$ & $1.1$\\
		\hline
	\end{tabular}
\smallskip
	\caption{Characteristics numbers of the epidemic \label{tableparam}}
\end{table}

Conditions of Proposition \ref{mainprop2} are satisfied provided that commuting parameters $\lambda_i$, $\mu_i$ are large enough. We have considered three sets of these parameters, given in Table \ref{tablecomm}, that correspond to the three possible
situations depicted in Proposition \ref{mainprop2}.

\begin{table}[ht!]
	\centering
	\begin{tabular}{|c|c|c|c|c||c|c|}
		\hline
		case & $\lambda_1$ & $\mu_1$ &  $\lambda_2$ & $\mu_2$ & $\eta_1$ & $\eta_2$\\
		\hline\hline
		A & $10$ & $10$ & $10$ & $1$ & $0.5$ & $0.9090909$\\
		\hline
		B & $10$ & $100$ & $10$ & $100$ & $0.009901$ & $0.009901$\\
		\hline
		C & $10$ & $10$ & $10$ & $70$ & $0.5$ & $0.125$\\
		\hline
	\end{tabular}
\smallskip
	\caption{Three sets of commuting parameters \label{tablecomm}}
\end{table}
The approximate expression $\tilde {\cal R}_{1,2}$ turns out to be a very good approximation of the exact value  ${\cal R}_{1,2}$, even in case A for which $\gamma$ is not so small compared to $\mu_2$ (see Table \ref{tableapprox}).

\begin{table}[ht!]
	\centering
	\begin{tabular}{|c||c|c|c|}
		\hline
		case & A & B & C\\
		\hline\hline
		$\ds \vphantom{\int}\max_{p_1,p_2}\vert\tilde {\cal R}_{1,2}- {\cal R}_{1,2}\vert$ & $1.9\,10^{-3}$ & $1.4\,10^{-4}$  & $6\,10^{-4}$ \\
		\hline
	\end{tabular}
\smallskip
	\caption{Quality of the approximation $\tilde {\cal R}_{1,2}$ \label{tableapprox}}
\end{table}

Figures \ref{fig1}, \ref{fig2}, \ref{fig3} show families of curves $p_1 \mapsto {\cal R}_{1,2}$ for different values of $p_2 \in [0,1]$. 
One can observe that theses curves possess the properties given by Propositions \ref{prop_dR12} and \ref{propmonotony}:
\begin{itemize}
	\item[-] they are either decreasing, increasing or decreasing down to a minimum and then increasing,
	\item[-]  they are ordered and the lower one is obtained for $p_2=0$ (i.e.~$N_{2c}=N_2$).
\end{itemize}
This last feature is intuitive: the more there are commuters from territory 2 (that spend time in territory 1 where the conditions of transmission disease is lower), the less the epidemic spreads. A way to reduce the value of ${\cal R}_{1,2}$ is thus to encourage commuting towards territory 1 (whatever are the commuting rates). However, the role of the resident population in territory 1 is far less intuitive because it does depends on the commuting rates.
\begin{enumerate}
	\item In case A, commuters from territory 2 return more rarely to home than commuters from territory 1 do. The condition of point 1 of Proposition \ref{mainprop2} is fulfilled. Then, the threshold ${\cal R}_{1,2}$ can be made small (and below $1$) when the proportion of resident in territory 1 is high i.e.~when the inhabitants of territory 1 are encouraged not to commute.
	\item In case B, both commuters return rapidly to their home. This means that the numbers of commuters from one territory present in the other one at a given time is low. Then the condition of point 2 of Proposition \ref{mainprop2} is fulfilled. Here,  it is better to encourage inhabitants of territory 1 to commute to the other territory where the disease spreads yet more easily... which is counter-intuitive at first sight. Indeed, commuters do not spend much time in the other territory, and therefore heuristically have less time to encounter and transmit the disease...
	\item In case C, commuters from territory 2 return more rapidly to home than commuters from territory 1 do, on the opposite of case A. Conditions of points 1 and 2 of  Proposition \ref{mainprop2} are not fulfilled here and we are in an intermediate situation for which point 3 of  Proposition \ref{mainprop2} occurs. It is theoretically possible to have  ${\cal R}_{1,2}<1$ on the condition that the proportion of commuters of territory 1 is well balanced.
\end{enumerate}
Finally, this example shows that changing only the return rates $\mu_1$, $\mu_2$ allows to obtain the three possible scenarios, but other changes could also exhibit them.

\begin{figure}[ht!]
	\centering
	\includegraphics[scale=0.5]{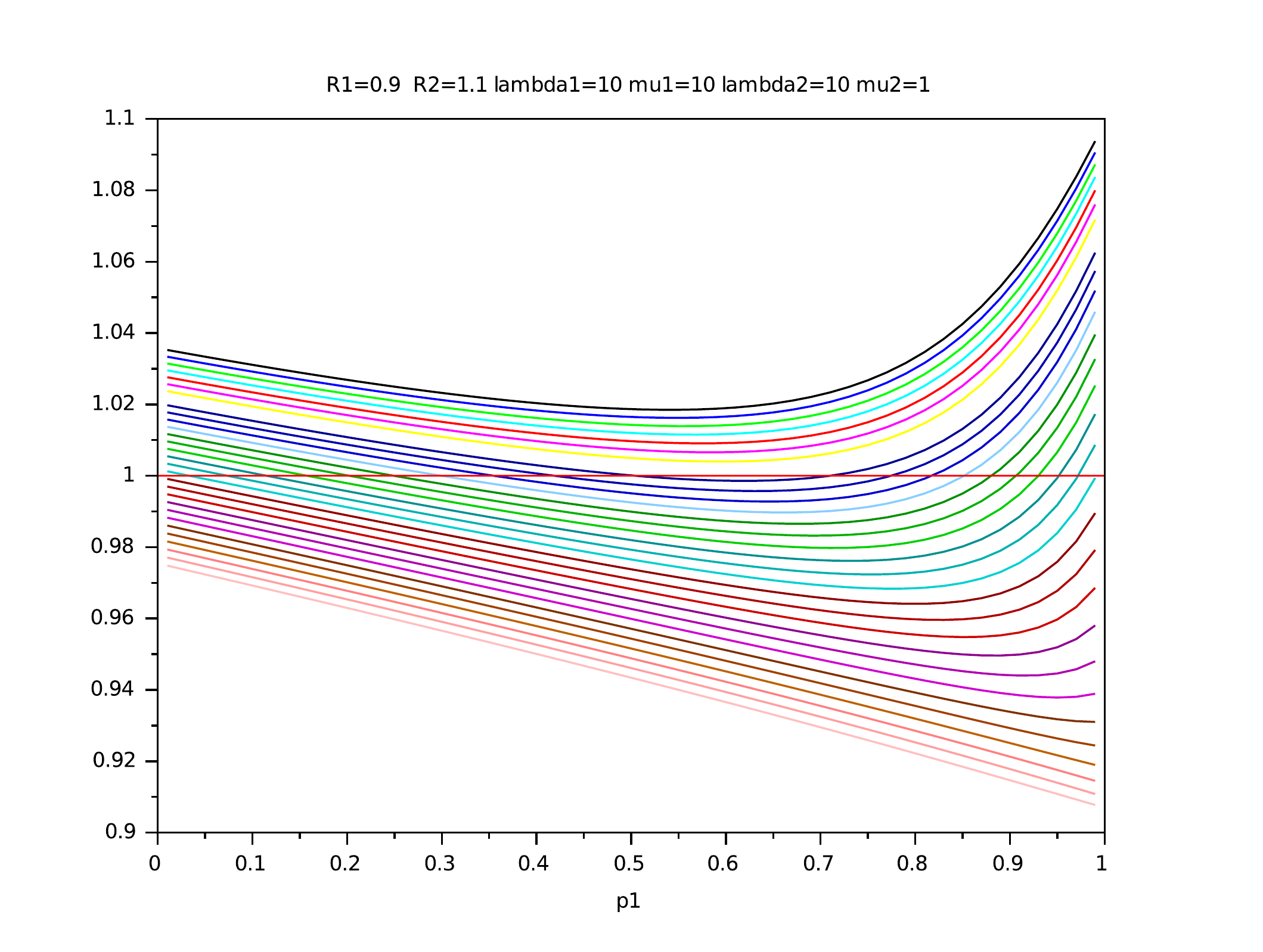}
	\caption{\label{fig1} ${\cal R}_{1,2}$ as a function of $p_1$ in case A (each curve corresponds to a value of $p_2 \in [0,1]$)}
\end{figure}

\begin{figure}[ht!]
	\centering
	\includegraphics[scale=0.5]{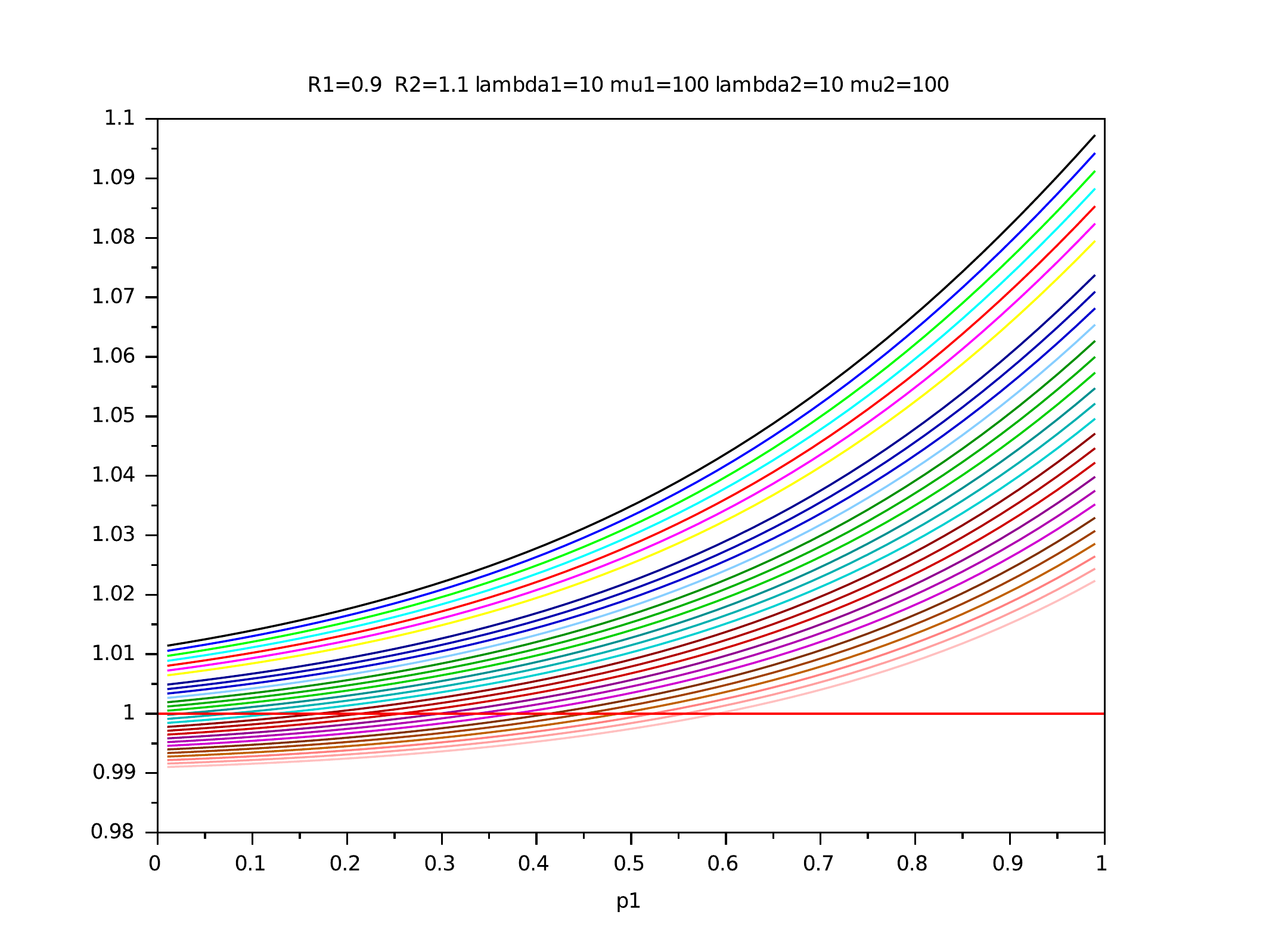}
	\caption{\label{fig2} ${\cal R}_{1,2}$ as a function of $p_1$ in case B (each curve corresponds to a value of $p_2 \in [0,1]$)}
\end{figure}

\begin{figure}[ht!]
	\centering
	\includegraphics[scale=0.5]{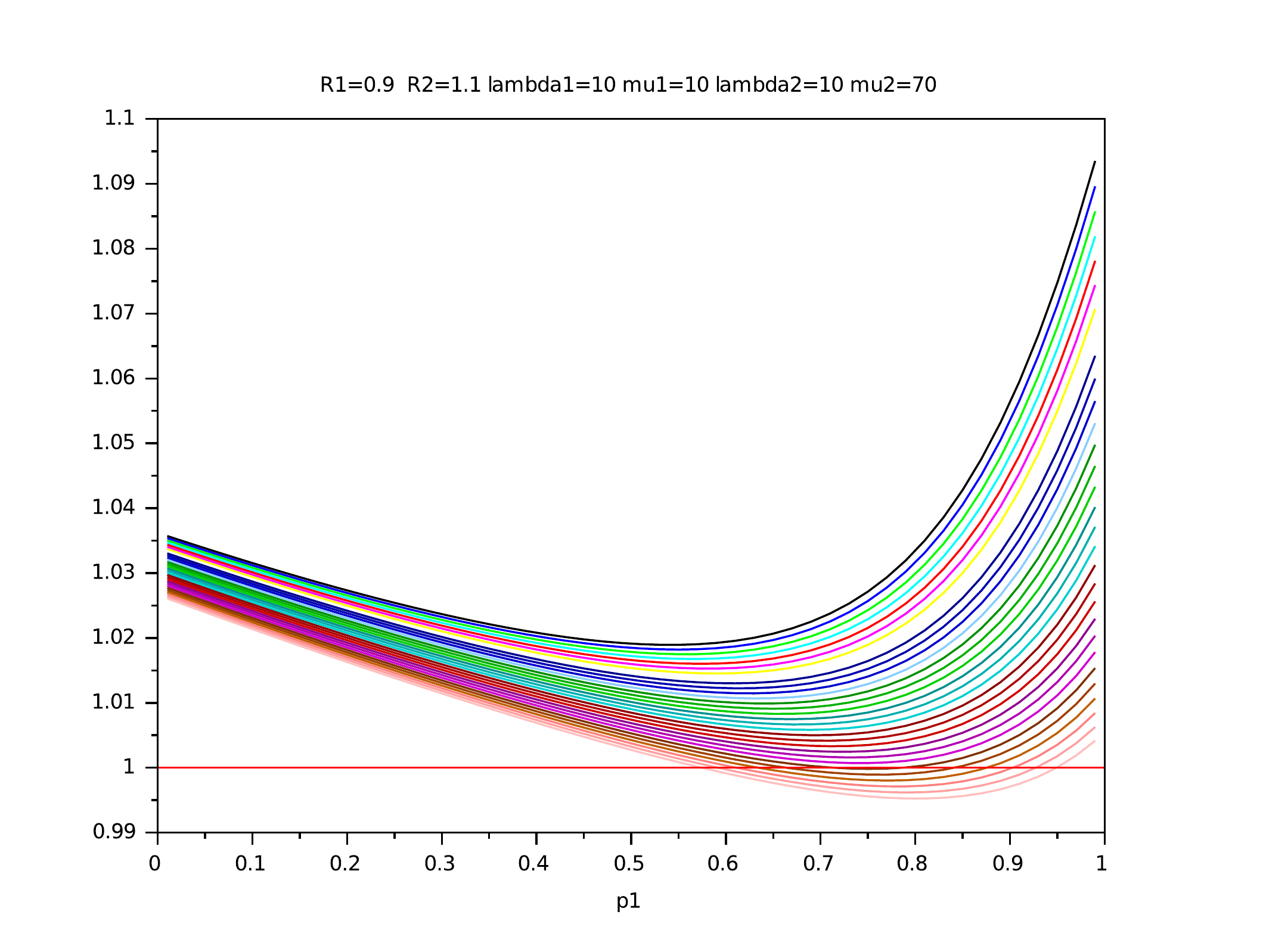}
	\caption{\label{fig3} ${\cal R}_{1,2}$ as a function of $p_1$ in case C (each curve corresponds to a value of $p_2 \in [0,1]$)}
\end{figure}

\newpage

\section{Conclusion}
\label{secconc}

In this work, we have been able to provide an explicit expression of the reproduction number, although the model is in dimension $18$. This expression has allowed us to study its minimization with respect to the proportions of permanently resident populations in each patch. We discovered a trichotomy of cases, with some counter intuitive situations. In each case, it is always beneficial to have commuters traveling to a safer city where the transmission rate is lower. However, for the safer city, three situations occurs: 
\begin{itemize}
	\item[-] either it is better to avoid commuting to the other city,
	\item[-] or on the opposite encouraging commuting to the more risky city reduces the reproduction number,
	\item[-] and in a third case there exists an optimal intermediate proportion of commuters of the safer city which minimizes the epidemic threshold.
\end{itemize}
In some sense, the permanently resident populations, which have been ignored in former modeling, can play an hidden role in an epidemic outbreak. This is illustrated on an example for which only right proportions of commuters (or permanently resident) avoid the outbreak. This suggests that counter-intuitive situations may also occur when considering networks with more than two nodes. 
The present study focuses on the reproduction number and how it can be reduced. The impacts of resident proportions on other epidemiological characteristics, such as the peak level or the finite size, may be the matter a future work.
The extension of the present results to more general networks is also a future perspective.

\section*{Acknowledgments}
The authors are grateful for the support of the French platform MODCOV19, and the Algerian Government for the PhD grant of Ismail Mimouni.

The authors thank the anonymous referee to let us know the alternative approach to obtain the expression of the reproduction number, mentioned in Remark \ref{remark}.



\end{document}